\date{}
\definecolor{sah}{rgb}{0.66,0.33, 0.04}
\definecolor{adel4}{cmyk}{1,0,0,0}
\definecolor{adel3}{rgb}{0.66,0.33, 0.04}
\definecolor{adel1}{cmyk}{0,0.20,1,0}
\definecolor{adel2}{cmyk}{0,0.40,1,0.30}
\definecolor{adel0}{rgb}{0.99,0.60, 0.30}
\definecolor{trut}{rgb}{0.99,0.80, 0.00}
\definecolor{trus}{rgb}{0.00, 0.50, 0.00}
 \definecolor{trust}{rgb}{0.99, 0.99, 0.80}
\definecolor{MaCouleur}{rgb}{0,0.9,0.3}
\newcommand{\CC}{\mathbb{C}}  
\newcommand{\NN}{\mathbb{N}}  
\newcommand{\RR}{\mathbb{R}}  
\newcommand{\EE}{\varepsilon}
\newtheorem{definition}{Definition}
\newtheorem*{theorem}{Main Theorem}
\newtheorem{proposition}{Proposition}
\newtheorem{lemma}{Lemma}
\newtheorem{remark}{Remark}
\newtheorem{coro}{Corollary}
   \title[]{Existence of corotating and counter-rotating vortex pairs for active scalar equations}
\author[T. Hmidi]{Taoufik Hmidi}
\address{IRMAR, Universit\'e de Rennes 1\\ Campus de
Beaulieu\\ 35~042 Rennes cedex\\ France}
\email{thmidi@univ-rennes1.fr}
\author[J. Mateu]{Joan Mateu}
\address{Departament de Matem\`{a}tiques\\
Universitat Aut\`{o}noma de Barcelona\\
08193 Bellaterra, Barcelona, Catalonia} \email{ mateu@mat.uab.cat}
\begin{document}
\subjclass[2000]{35Q35, 76B03, 76C05}
\keywords{ $(\hbox{SQG})_\alpha$ equations, steady vortex pair, desingularization }

\begin{abstract}
In this paper, we study    the existence of  corotating and counter-rotating pairs of simply connected patches  for Euler equations and the $(\hbox{SQG})_\alpha$ equations with $\alpha\in (0,1).$ From the numerical experiments implemented  for Euler equations in \cite{DZ, humbert, S-Z} it is conjectured  the existence of  a curve of steady vortex pairs  passing through the point vortex pairs. There are some analytical proofs based on variational principle \cite{keady, Tur}, however   they do not give enough information about the pairs such as the uniqueness or the topological structure of each single vortex. We  intend in this paper to give direct proofs confirming the numerical experiments and extend these results for the $(\hbox{SQG})_\alpha$ equation when $\alpha\in (0,1)$. The proofs rely on  the contour dynamics equations combined with a   desingularization of the point vortex pairs and the application of the  implicit function theorem.  \end{abstract}

\maketitle{}
\tableofcontents
\section{Introduction}
The present work deals with the dynamics of vortex pairs  for some nonlinear transport  equations arising in fluid dynamics. The equations that we shall consider are the   generalized   surface quasi-geostrophic  equations  which describe the evolution  of the potential temperature $\theta$ through the system,
\begin{equation}\label{sqgch2}
\left\{ \begin{array}{ll}
\partial_{t}\theta+v\cdot\nabla\theta=0,\quad(t,x)\in\RR_+\times\RR^2, \\
v=-\nabla^\perp(-\Delta)^{-1+\frac{\alpha}{2}}\theta,\\
\theta_{|t=0}=\theta_0.
\end{array} \right.
\end{equation}
Here $v$ refers to the velocity field, $\nabla^\perp=(-\partial_2,\partial_1)$  and $\alpha$ is a real parameter taken in  $[0,2)$. 
The  operator   $(-\Delta)^{-1+\frac{\alpha}{2}}$ is  of convolution type and defined as follows
$$
(-\Delta)^{-1+\frac{\alpha}{2}}\theta(x)=\int_{\mathbb{R}^2}K_\alpha(x-y) \theta(y)dy
$$
with 
 \begin{equation}
K_\alpha(x)=\left\{ \begin{array}{ll}
-\frac{1}{2\pi}\log|x|,\quad \hbox{if}\quad \alpha=0\\
\frac{C_\alpha}{2\pi}\frac{1}{|x|^\alpha}, \quad \hbox{if}\quad \alpha\in (0,2)
\end{array} \right.
\end{equation} 
and  $C_\alpha=\frac{\Gamma(\alpha/2)}{2^{1-\alpha}\Gamma(\frac{2-\alpha}{2})}$ where $\Gamma$ stands for the gamma function.  Note that this   model  was proposed by C\'ordoba { et al.} in \cite{C-F-M-R} as an interpolation between Euler equations and the surface quasi-geostrophic model (SQG)  corresponding to $\alpha=0$ and $\alpha=1$, respectively. We mention  that the SQG equation is used in   \cite{Held,Juk} to describe the atmosphere  circulation  near the tropopause and  to track the  ocean dynamics in  the upper layers  \cite{Lap}.  The   mathematical  analogy with the classical  three-dimensional incompressible Euler equations was pointed out in   \cite{C-M-T}.

In the last few years there has been a growing interest in the mathematical study of these active scalar 
equations. 
Local well-posedness of classical solutions  has been discussed  in various function spaces. For instance, 
this was implemented  in the framework of Sobolev spaces  \cite{C-C-C-G-W}, however,
the global existence is still an open problem except for Euler equations. The second  restriction with the  $(\hbox{SQG})_\alpha$ equation concerns the construction of Yudovich solutions-- known to exist globally in time for  Euler equations \cite{Y}-- which remains unsolved even locally in time.  The main difficulty  is due to the velocity which  is in general singular and scales  below the Lipschitz class. Nonetheless, one can say more about this issue for some special class of concentrated vortices. More precisely,  when the initial data is a single  vortex patch, that is, $\theta_0(x)=\chi_D$ is  the characteristic function of a bounded simply connected smooth domain $D$,  there is a unique local solution in the patch form $
 \theta(t)=\chi_{D_t}. $ In this case,  the boundary   motion of the domain $D_t$  
 is described by the contour dynamics formulation. Indeed, 
 the Lagrangian parametrization $\gamma_t:\mathbb{T}\to \partial D_t$ obeys to the 
 following integro-differential equations
 $$
 \partial_t\gamma_t(w)=\int_{\mathbb{T}}K_\alpha\big(\gamma_t(w)-\gamma_t(\xi)\big)\gamma_t^\prime(\xi)d\xi.
 $$
For more details, see
\cite{C,Gan,Ro}.  The global persistence of the boundary regularity is established for
Euler equations by Chemin \cite{C}, we refer also to  the paper of Bertozzi and Constantin \cite{B-C} 
for another proof. However for $\alpha>0$ only local persistence result is known  and  numerical 
experiments carried out in \cite{C-F-M-R} reveals  a   singularity formation  in finite  time.
Let us mention that  the contour dynamics equation remains  locally well-posed when the domain of
the initial data is composed of     multiple patches with different magnitudes in each component.
  
In this paper  we shall focus on steady single and multiple patches moving without changing shape, 
called relative equilibria or  V-states according to the terminology of Deem and Zabusky. Their  dynamics is seemingly
simple flow configurations described  by rotating or translating motion but it is  immensely rich and 
exhibits complex behaviors. There is abundant literature dealing with 
 numerical and analytical
structures for the 
isolated rotating patches and the
first example  goes back to  \mbox{Kirchhoff \cite{Kirc}} who proved  for Euler equations  that an ellipse 
of semi-axes $a$ and $b$  rotates uniformly  with the  angular velocity $\Omega = ab/(a+b)^2$. About one 
century later, Deem and Zabusky \cite{DZ}  provided strong numerical evidence  for the existence of 
rotating patches  with $m$-fold symmetry for the  integers $m \in\big\{3,4,5\big\}$. Recall that a 
domain is said  $m$-fold symmetric if it is invariant by the action of the dihedral \mbox{group $\textnormal{D}_m$.}
Few years later, Burbea gave in \cite{Bur} an  analytical  proof and showed  for any integer
$m\geq2$ the existence of a curve of V-states with $m$-fold symmetry bifurcating from Rankine vortex
at the angular velocity $\frac{m-1}{2m}$. The proof relies on  the use of complex analysis tools 
combined with the bifurcation theory. The regularity of the boundary   close to  Rankine vortices 
has been discussed very recently by the authors and Verdera in \cite{HMV} and where it was proved that 
the boundary is $C^\infty$ and convex. It seems that the boundary is actually  analytic according to 
the recent  result of Castro, C\'ordoba and G\'omez-Serrano \cite{Cor1}.  We also refer  to the paper
\cite{WOZ} where it is proved that corners with  right angles  is the only plausible scenario for  the 
limiting V-states. It is worthy to  point out that Burbea's  approach has been successfully implemented for 
the $(\hbox{SQG})_\alpha$ equations in \cite{Cor1,H-H} but with much more delicate computations. Similarly to the case $\alpha=0,$
we find countable family of bifurcating curves at some known angular velocities related to
gamma function.  In the same context, it turns out that for Euler equations a second bifurcation of countable branches  from the ellipses occurs but  the shapes have  in fact  less symmetry and being at most two-folds.  This was first observed numerically 
in \cite{Kam, Luz} and analytical proofs were recently discussed in \cite{Cor,HM0}.  
Another valuable  investigation has been devoted to the existence of doubly connected V-states where the rotating patches have  only one hole. In this case the boundary is comprised  of two Jordan curves obeying to   two coupled singular  nonlinear equations and thereby   the dynamics acquires more richness  and significant behaviors. The existence of such structures  was first accomplished 
for Euler equations in \cite{H-F-M-V} by using bifurcation tools in the spirit of Burbea's approach. Roughly speaking, for higher symmetry $m$  we get 
 two branches of $m-$fold V-states bifurcating from the annulus $\big\{b<|z|<1\big\}$ and numerical experiments 
about the limiting  V-states reveal different plausible  configurations depending on the size of the parameter $b$. Later, this result has been  
extended for the $(\hbox{SQG})_\alpha$ equations in \cite{DHH} for $\alpha\in (0,1)$ which surprisingly
exhibit various completely new behaviors compared to Euler equations. For example we  find rotating 
patches with negative and positive angular velocities for any $\alpha\in(0,1)$. It is worthy to mention 
that the bifurcation in the preceding cases is obtained under the transversality assumption 
of Crandall-Rabinowitz corresponding to simple nonlinear eigenvalues.
However the bifurcation in the degenerate case where there is  crossing eigenvalues 
is more complicate and  has been  recently solved\mbox{ in \cite{HM}. }

The main task  of this paper
is to deal with non connected V-states where the bifurcation arguments discussed above are out of use.  To be more precise, we shall be concerned with vortex pairs  moving without deformation.  This is   
 a fundamental and rich  subject in vortex dynamics 
and they serve for instance to model trailing vortices  behind the  wings of aircraft in steady horizontal flight  or to describe  the interaction between isolated vortex and a solid wall. We point out that the literature is very abundant and it  is by no means an easy  task to collect and recall all the results done in this field. Therefore we shall restrict the discussion  to  the cases of  counter-rotating and corotating vortices and recall some results that fit with our main goal. In the first case, the most common studied  configuration is  two  symmetric vortex  pairs with opposite circulations moving steadily with constant speed in a fixed direction. Notice that an explicit  example is given by a pair of point vortices with opposite circulations  which  translates steadily with the speed $U_{sing}=\frac{\gamma}{2\pi d},$ where $d$ is the distance separating the point vortices and $\gamma$ is the magnitude, see for instance \cite{Lamb}. Another nontrivial explicit example  of touching counter-rotating vortex pair was discovered by Lamb \cite{Lamb}, where the vortex is not uniformly distributed but has  a  smooth compactly supported profile related to Bessel functions of the first kind. Later,  Deem and Zabusky \cite{DZ} and Pierrehumbert \cite{humbert} provided numerically a class of translating vortex pairs of symmetric patches and they conjectured  the existence of a curve of translating symmetric pair of simply connected patches emerging  from   two point vortices and ending with two touching patches at right angle.  We mention  that  \mbox{Keady \cite{keady}} used a variational principle  in order to explore  the existence part and give asymptotic estimates for some significant functionals such as the excess kinetic energy and the speed of the pairs.  The basic idea is to  maximize the excess kinetic energy supplemented with some additional constraints and to show the existence of a maximizer taking the form of a pair of vortex patches in the spirit of the paper of  \mbox{Turkington \cite{Tur}. }However,  this approach does not give sufficient information on the structure of the pairs. For example the uniqueness of the maximizer  is left open and the topology of the patches is not well-explored, and it is not clear from the proof whether or not  each single patch is simply connected as it is suggested numerically.   
Concerning the corotating vortex pair, which consists of two symmetric patches with the same 
circulations  and rotating about the centroid of the system with constant angular velocity, it was 
investigated numerically by Saffman and Szeto in  \cite{S-Z}. They showed that when far apart, 
the vortices are almost circular and when the distance between them decreases they become more 
deformed until they touch. We remark that  a pair of point vortices far away at  a distance $d$ and  
with the same  magnitude $\gamma$  rotates   steadily with the angular velocity $\Omega_{sing}=\frac{\gamma}{\pi d^2}.$  By using variational principle, Turkington gave in \cite{Tur}  an analytic proof of the existence of corotating vortex pairs but    this general approach does not give enough precision   on the topological structure of each vortex patch similarly to the translating case commented  before. 
Note  that in the same direction Dritschel \cite{Drit}  calculated  numerically  V-states of vortex pairs with different 
shapes and discussed their linear stability. Very recently,  Denisov established in \cite{Denisov} 
for a modified Euler equations the existence of corotating simply connected vortex patches and analyzed 
the contact point of  the limiting V-states.  To end this short discussion, we want to  emphasize once again that the subject of vortex pairs has been intensively 
studied during the past and it is difficult  to track, know  and recall here  everything  written about. 
So, we have only selected some basic results  and the reader can find more details not only 
in this subject but also in some 
other connected topics in \cite{Ar, Burton, Burton1, gallay, Gallay1,Luz, Marc,New, Nor,  Saf, Smets} 
and the references therein.

In the current paper we intend to give  direct proofs for the existence of corotating and counter-rotating vortex pairs using the contour dynamics equations. We shall also extend these results to  the $(\hbox{SQG})_\alpha$ equations  for $\alpha\in (0,1)$. Now we shall  fix some notations before stating our main result. 
Let $0<\EE<1,$ $d>2$ and take a small simply connected domain $D_1$ containing the origin and contained in the open ball $D(0,2)$ centered at the origin and with radius $2$. Define  \begin{equation}\label{initi1}
\theta_{0,\EE}=\frac{1}{\EE^2}\chi_{D_1^\EE}+\delta \frac{1}{\EE^2}\chi_{D_2^\EE},\quad D_1^\EE=\EE D_1,\quad D_2^\EE=-D_1^\EE+2d,
\end{equation}
where the number $\delta$ is taken in $\{\pm1\}$. As we can readily observe, this initial data is composed of symmetric pair of simply connected patches with equal or opposite circulations. \\ The main result of the paper  is the following.
\begin{theorem}
Let $\alpha\in[0,1),$ there exists $\varepsilon_0>0$ such that the following results hold true.
\begin{enumerate}
\item Case $\delta=1$. For any $\EE\in(0,\EE_0]$ there exists a  strictly convex domain $D_1^\EE$  at least of  class $C^1$ such that $\theta_{0,\EE}$ in \eqref{initi1} generates a  corotating vortex pair for 
\eqref{sqgch2}.
\item Case $\delta=-1$. For any $\EE\in(0,\EE_0]$ there exists a strictly  convex domain $D_1^\EE$ of  class $C^1$ such that $\theta_{0,\EE}$ generates a counter-rotating vortex  pair for \eqref{sqgch2}.
\end{enumerate}
\end{theorem}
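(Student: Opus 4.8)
The plan is to recast the V-state condition as a nonlinear functional equation to which the implicit function theorem applies after a desingularizing rescaling. Fix the unscaled domain $D_1$ and parametrize its boundary by a conformal map $\phi(w)=w+f(w)$ of the exterior unit disk, with $f$ small and holomorphic, so that $D_1$ is a perturbation of the unit disk; the second patch is then determined by the symmetry $D_2^\EE=-D_1^\EE+2d$. Writing the tangency (streamline) condition in the rotating frame for $\delta=1$, namely $\big(u_\EE-\Omega(z-c)^\perp\big)\cdot n=0$ on $\partial D_1^\EE$ with $c$ the centroid, and in the translating frame $\big(u_\EE-U e_2\big)\cdot n=0$ for $\delta=-1$, I would split the velocity into the self-induced part and the part induced by the far patch. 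The first step is the scaling analysis: under $z=\EE\zeta$, $\zeta\in\partial D_1$, homogeneity of $\nabla K_\alpha$ of degree $-1-\alpha$ gives $u_\EE^{\mathrm{self}}(\EE\zeta)=\EE^{-1-\alpha}\,U^{\mathrm{self}}(\zeta)$ with $U^{\mathrm{self}}$ the self-velocity of the unit-strength patch $D_1$, while the mutual part $u_\EE^{\mathrm{mut}}(\EE\zeta)$ is smooth in $\EE$ and converges as $\EE\to0$ to the constant point-vortex velocity $\delta\,|D_1|\,\nabla^\perp K_\alpha(2d\,e_1)$.

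The decisive observation is that the disk is an exact solution of the isolated problem: $U^{\mathrm{self}}\cdot n\equiv0$ when $D_1$ is a disk, so the singular prefactor $\EE^{-1-\alpha}$ multiplies a quantity vanishing at $f=0$. Hence, after multiplying the whole equation by $\EE^{1+\alpha}$, one obtains a functional
$$F_\delta(\EE,f,\mu)=\EE^{1+\alpha}\big(u_\EE^{\mathrm{mut}}-\mathrm{rot}\big)\cdot n+\big(U^{\mathrm{self}}\cdot n\big)[f]$$
(with $\mu=\Omega$ or $U$, and $\mathrm{rot}$ the rotation/translation term) that extends continuously to $\EE=0$, where $F_\delta(0,f,\mu)=\big(U^{\mathrm{self}}\cdot n\big)[f]$. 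I would set this up on Hölder spaces $X=C^{1+\beta}$, $Y=C^{\beta}$ restricted to the symmetry class dictated by the configuration (functions even about the axis joining the vortices), which removes the translation degree of freedom and leaves $\mu$ determined by a scalar solvability condition. The constant leading mutual velocity and the constant part of the rotation/translation term are pure mode-one functions once dotted with the normal of the disk; matching them fixes $\mu$ at leading order to the point-vortex value $\mu_0$ (the classical $\Omega_{sing}$ or $U_{sing}$ and their gSQG counterparts involving $C_\alpha$), so that $F_\delta(0,0,\mu_0)=0$.

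It then remains to compute $D_{(f,\mu)}F_\delta(0,0,\mu_0)$ and show it is an isomorphism. Its $f$-component is the linearization $\partial_f\big(U^{\mathrm{self}}\cdot n\big)$ of the isolated V-state operator at the disk; this is a Fourier multiplier whose eigenvalues were computed by Burbea \cite{Bur} for Euler and in \cite{Cor1,H-H} for the gSQG range $\alpha\in]0,1[$, and which are nonzero on every mode $|n|\ge2$. Together with the mode-one balance that fixes $\mu$, this yields invertibility on the chosen symmetric spaces, and the implicit function theorem produces, for each small $\EE$, a unique $\big(f_\EE,\mu_\EE\big)$ with $f_\EE\to0$ and $\mu_\EE\to\mu_0$, i.e.\ a corotating pair when $\delta=1$ and a counter-rotating pair when $\delta=-1$. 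Since $f_\EE$ is $O(\EE^{1+\alpha})$ small in $C^{1+\beta}$, the domain $D_1^\EE$ is a small perturbation of a disk and is therefore strictly convex and of class $C^1$ (indeed $C^{1+\beta}$), which gives the stated regularity.

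I expect the main obstacle to be the functional-analytic regularity theory for the singular kernel rather than the scaling bookkeeping: proving that the self-interaction and mutual-interaction operators are well defined and continuously differentiable as maps between the Hölder spaces, uniformly down to $\EE=0$ and uniformly in $\alpha\in[0,1[$, requires a careful treatment of the weakly singular integrals $\int|x-y|^{-\alpha}$ and their tangential derivatives. A secondary difficulty is the degenerate way the parameter $\mu$ enters—it affects the equation only at order $\EE^{1+\alpha}$—so the isomorphism statement must be arranged through the mode-one solvability condition (equivalently, through the symmetry reduction) rather than by differentiating a single scalar equation; keeping this compatible with the desingularized functional so that the implicit function theorem applies cleanly is the delicate point.
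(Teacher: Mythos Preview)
Your strategy is essentially the paper's: desingularize the boundary equation, identify the disk as the $\EE=0$ solution, show the linearized self-interaction at the disk is the Burbea/gSQG Fourier multiplier (invertible on modes $\ge 2$), fix the rotation/translation speed through the mode-one condition, and conclude via the implicit function theorem; convexity then follows from smallness. The one substantive difference is the ansatz. You take $\phi(w)=w+f(w)$ and multiply the equation by $\EE^{1+\alpha}$; the paper instead builds the scaling into the parametrization, $\phi(w)=w+\EE\,f(w)$ for Euler and $\phi(w)=w+\EE^{1+\alpha}f(w)$ for $0<\alpha<1$. This is precisely what resolves the difficulty you flag at the end: in your normalization the parameter $\mu$ enters only at order $\EE^{1+\alpha}$ and thus drops out of the $\EE=0$ functional, so you would need a Lyapunov--Schmidt step (solve modes $\ge2$ for $f=O(\EE^{1+\alpha})$, then divide the mode-one equation by $\EE^{1+\alpha}$ and check regularity of the quotient). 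With the paper's ansatz, the rotation term $\Omega(\EE\overline{z}-d)$ retains the unscaled $-\Omega d$ at $\EE=0$, so the mode-one equation reads $\Omega d=\text{const}+O(\EE)$ and can be solved directly for $\Omega=\Omega(\EE,f)$ as a $C^1$ function with $\partial_f\Omega(0,0)=0$; the reduced map $\widehat G(\EE,f)=G(\EE,\Omega(\EE,f),f)$ then lands in $\widehat Y$ and the implicit function theorem applies in one clean step. Otherwise your outline matches the paper's proof.
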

Before giving the basic ideas of the proofs some remarks are in order.
\begin{remark}
The domain $D_1^\EE$ is a small perturbation of the disc $D(0,\EE)$ , centered at zero and of radius $\EE$. Moreover, it can be described by the conformal parametrization $\phi_\EE:\mathbb{T}\to \partial D_1^\EE$ which belongs for $0<\alpha<1$ to  $C^{2-\alpha}(\mathbb{T})$  and for $\alpha=0$  to $C^{1+\beta}$ for any $\beta\in (0,1),$ and  satisfies
$$
\phi_\EE(w)=\EE w+\EE^{2+\alpha}f_\EE(w)\quad\hbox{with}\quad \|f_\EE\|_{C^{2-\alpha}}\le 1.
$$
Therefore  the boundary of each V-state  is at least $C^1$.  Note that with slight modifications we can adapt the proofs and show that the domain $D_1^\EE$ belongs to $C^{n+\beta}$ for any fixed $n\in\NN$. Of course, the size of $\EE_0$ depends on the parameter $n$ and cannot be uniform; it shrinks to zero as $n$ grows to infinity. 
 However, we expect the boundary to be analytic meaning that the conformal mapping possesses a holomorphic extension in $D(0,r)^c$ for some $0<r<1.$ The ideas developed in the recent paper \cite{Cor} might be useful to confirm such expectation. 
\end{remark}
\begin{remark}
 In the setting  of the vortex patches   the global existence with smooth boundaries is not known for $\alpha\in(0,2)$. In \cite{Cor1,DHH,H-H} we exhibit  the first nontrivial examples of simply connected and doubly connected V-states  which are  periodic in time. We find here another  class of global solutions which are the vortex pairs.
 \end{remark}

\begin{remark}
The proof is valid for $\alpha\in[0,1)$ but we expect  that the result remains true for $\alpha\in [1,2)$. We believe that the use of the spaces introduced in \cite{Cor1} could  be helpful for solving these cases.
\end{remark}

\begin{remark}
As we shall see later, we can unify the formalism leading to the existence of corotating patches with the 
point vortex model. The latter one is obtained when $\EE=0$ in which case we find the classical result
which says  that two point vortices at distance $2d$ and with the  same magnitude rotate uniformly about their center with the angular velocity
$ \Omega_{sing}^\alpha=\frac{\alpha C_\alpha}{\pi(2d)^{2+\alpha}}\cdot$ However when they have opposite signs they  exhibit a uniform translating motion  with the speed  $ U_{sing}^\alpha=\frac{\alpha C_\alpha }{\pi(2d)^{1+\alpha}}\cdot$
\end{remark}
Next we shall sketch the basic ideas used to prove the main result. 
We will just restrict the discussion  to the corotating pairs for  Euler equations   
since the proofs for the remaining cases  follow the same lines but with much more involved computations.
The proof relies on the desingularization of point vortex pairs combined with the implicit function theorem.
We  first formulate the equations governing the  corotating vortex pairs using  the complex variable, and 
we shall see later  in \mbox{Section \ref{st-vor}}  more details. However, we think that at this stage  it is convenient to sketch the arguments needed to get the contour dynamics equations of the boundary. Let $D_1$ and $D_2$ be  two disjoint simply connected domains and $a,b$ be two non vanishing real numbers. Then the initial datum
$$
\theta_0=a \chi_{D_1} +b\chi_{D_2}
$$
gives rise to a rotating vortex pair about the point $(d,0)$ and with the angular velocity $\Omega$ if the solution $\theta(t)$ of \eqref{sqgch2}  takes the form
$$
\theta(t,z-d)=\theta_0\big(e^{it \Omega}(z-d)\big).
$$
Therefore, inserting this expression  into the  equation \eqref{sqgch2} we find
$$
\Big(v_0(z)-\Omega(z-d)\Big)\cdot\nabla \theta_0(z)=0
$$
with $v_0$  the velocity associated to $\theta_0.$  Using the patch structure, the preceding equation reduces to
$$
\Big(v_0(z)-\Omega(z-d)\Big)\cdot \vec{n}(z)=0, \quad \forall z\in \partial D_1\cup\partial D_2
$$
where $\vec{n}(z)$ is a normal vector to the boundary.
Combining Biot-Savart law with Green-Stokes formula we find
$$
\overline{v_0(z)}=
 \frac{a}{4\pi}\int_{ \partial D_1} \frac{\overline{\xi}-
\overline{z}}{\xi-z}\, d\xi+\frac{b}{4\pi}\int_{ \partial D_2} \frac{\overline{\xi}-
\overline{z}}{\xi-z}\, d\xi, \quad \forall z \in \mathbb{C}.
$$
Consequently, using the special structure \eqref{initi1} combined with some   elementary transformations in the complex plane we deduce that  the two equations governing the boundary are actually equivalent to the following equation 
$$
 \textnormal{Re}\Big\{ \Big(2\Omega\big(\EE\overline{z}-d)+I_\EE(z)\Big)\, \vec{\tau}(z)\Big\}=0,\quad\forall  z\in \partial D_1
$$
with $\vec{\tau}(z)$ being the unit tangent vector to the boundary $ \partial D_1$
positively oriented and 
$$
I_\EE(z)=\frac{1}{2i\pi \varepsilon}\int_{\partial D_1}\frac{\overline{\xi}-\overline{z}}{\xi-z}d\xi-\frac{1}{2i\pi}\int_{\partial D_1^\varepsilon}\frac{\overline{\xi}}{\EE\xi+\EE z-2d}d\xi.
$$
The basic idea of the proof is to extend the functional defining the vortex pairs beyond $\EE=0$ corresponding to point vortex pairs and afterwards  to apply the implicit function theorem. As we can see, the first integral term  in $I_\EE(z)$ is singular and to remove the singularity we should seek for domains which are slight perturbation of the unit disc with a small amplitude of order $\EE$. In other words, we   look for a conformal  parametrization of  $D_1$ in the form
$$
\forall \, w\in \mathbb{T},\quad \phi_\EE(w)=w+\EE f(w)
$$ 
where the Fourier expansion of $f$ takes the form
$$
f(w)=\sum_{n\geq1} a_n w^{-n}, \quad a_n\in \RR\quad \hbox{and}\quad \|f\|_{C^{1+\beta}}\leq 1
$$
for some $\beta\in(0,1)$. Note that the singularity in $\EE$ is in fact removable owing to the symmetry of the disc. 
Indeed,  following  standard computations we get the expansion
\begin{equation}\label{Expa1}
I_\EE(\phi_\EE(w))=-\frac1\EE \overline{w}+J_\EE(\phi_\EE(w))
\end{equation}
where $J_\EE$ belongs to the space $C^\beta$ and can be extended for $\EE\in (-\EE_0,\EE_0)$ with $\EE_0>0.$   Setting
$$
G(\EE,\Omega,f(w))\equiv\textnormal{Im}\bigg\{ \Big(2\Omega\big(\EE{\phi_\EE(w)}-d)+I_\EE\big(\phi_\EE(w)\big)\Big)\, w\, \phi_\EE^\prime(w)\bigg\},
$$
then the equation of the vortex pairs is  simply given by
$$
\forall \, w\in \mathbb{T},\quad G(\EE,\Omega,f(w))=0.
$$
It follows from the expansion \eqref{Expa1} that we can get rid of  the singularity in $\EE$ and this is the first step towards the application of 
the implicit function theorem. Before giving further details we should first  fix the function spaces. Let 
$$
X^0=\big\{ f\in C^{1+\beta}(\mathbb{T}), f(w)=\sum_{n\geq1} a_n w^{-n}\big\}, 
$$
and 
$${Y}^0=\Big\{ f\in C^{\beta}(\mathbb{T}), f=\sum_{n\geq1} a_n e_n,\, a_n\in \RR\Big\},\quad \widehat{Y^0}=\big\{f\in Y^0, a_1=0\big\},\, e_n(w)\equiv\hbox{Im}(w^n).
$$
According to Proposition \ref{prop1} the function $G:(-\frac12,\frac12)\times \RR\times B_1^0\to Y^0$ is well-defined and it is  of class $C^1$, where  $B_1^0$ is the open unit ball of $X^0$.  Moreover 
$$
\partial_f G(0,\Omega, 0) h(w)=-\hbox{Im}(h^\prime(w)).
$$
However, this operator is not invertible from $X^0$ to $Y^0$ but it does from $X^0$ to $\widehat{Y^0}.$ The next step is to choose carefully $\Omega$ such that the image of the nonlinear functional $G$ is contained in the vector \mbox{space $\widehat{Y^0}.$} This will be done carefully in Section \ref{real-velo} and  leads eventually  to a new nonlinear constraint of the type $\Omega=\Omega(\EE,f).$ Consequently the equation of the vortex pairs becomes
$$
F(\EE,f(w))\equiv G(\EE,\Omega(\EE,f),f)=0.
$$
Note that with this formulation  the point vortex configuration corresponds to $F(0,0)=0$ in which case  $\Omega=\Omega_{sing}^0=\frac{1}{4\pi d^2}$. In addition, from the platitude of $\Omega$ we deduce that the linearized operator remains the same, that is, 
$$\partial_f F(0,0)=\partial_f G(0,\Omega_{sing}^0, 0)
$$ 
which is invertible from $X^0$ to $\widehat{Y^0}$. Therefore and at  this stage one can use 
the implicit function theorem which implies the local  existence of  
a unique curve of solutions $\EE\mapsto \phi_\EE$ passing through $(0,0)$ and remark  
that each point of this curve is a nontrivial  corotating vortex pair of symmetric simply connected  patches.

The remaining of the paper is organized as follows. In Section \ref{prelim} we shall gather some tools dealing with the function spaces  and give some results on Newton and Riesz potentials. In Section \ref{st-vor} we shall write down the equations governing the corotating and translating vortex pairs of symmetric patches for both Euler and $(\hbox{SQG})_\alpha$ equations. Sections \ref{co-E} and \ref{count-E} are dedicated to the proofs of the Main Theorem.

\vspace{0,2cm}

{\bf {Notation.}}
We need to fix some notations that will  be frequently used along this paper.
 We denote by $C$ any positive constant that may change from line to line.
 We denote by $\mathbb{D}$ the unit disc and its boundary, the unit circle, is denoted by  $\mathbb{T}$. 
  Let $f:\mathbb{T}\to \CC$ be a continuous function, we define  its  mean value by,
$$
\fint_{\mathbb{T}} f(\tau)d\tau\equiv \frac{1}{2i\pi}\int_{\mathbb{T}}  f(\tau)d\tau,
$$
where $d\tau$ stands for the complex integration. Finally, for $x\in \RR$ and $n\in \NN$,  we use the notation $(x)_n$ to denote  the  Pochhammer symbol defined by,
$$
(x)_n = \begin{cases}   1   & n = 0 \\
  x(x+1) \cdots (x+n-1) & n \geq1.
 \end{cases}
 $$

\section{Preliminaries and background}\label{prelim}



In this section we shall  briefly recall the classical H\"{o}lder spaces on the periodic case and 
state some classical facts on the continuity of fractional integrals over these spaces.
 It is convenient to think of  $2\pi-$periodic function $f:\RR\to\CC$ as a function of the complex 
 variable $w=e^{i\eta}$ rather than a function of the real variable $\eta.$ To be more precise, 
 let  $f:\mathbb{T}\to \RR^2$, be a continuous function, then it  can be assimilated to  a $2\pi-$periodic function $g:\RR\to\RR$ via the relation
$$
f(w)=g(\eta),\quad w=e^{i\eta}.
$$
Hence when  $f$ is  smooth enough we get
$$
f^\prime(w)\equiv\frac{df}{dw}=-ie^{-i\eta}g'(\eta).
$$  
Because  $d/dw$ and $d/d\eta$ differ only by a smooth factor with modulus one  we shall  in the sequel work with $d/dw$ instead of $d/d\eta$ which appears to be  more convenient in the computations.
Now we shall introduce  H\"older spaces  on the unit circle $\mathbb{T}$.
\begin{definition}
Let 
$0<\beta<1$. We denote by $C^\beta(\mathbb{T}) $  the space of continuous functions $f$ such that
$$
\Vert f\Vert_{C^\beta(\mathbb{T})}\equiv \Vert f\Vert_{L^\infty(\mathbb{T})}+\sup_{x\neq y\in \mathbb{T}}\frac{\vert f(x)-f(y)\vert}{\vert x-y\vert^\beta}<\infty.
$$
For any integer $n$ the space $C^{n+\beta}(\mathbb{T})$ stands for the set of functions $f$ of class $C^n$ whose $n-$th order derivatives are H\"older continuous  with exponent $\beta$. This space is equipped with the usual  norm,
$$
\Vert f\Vert_{C^{n+\beta}(\mathbb{T})}\equiv \Vert f\Vert_{L^\infty(\mathbb{T})}+\Big\Vert \frac{d^n f}{dw^n}\Big\Vert_{C^\beta(\mathbb{T})}.
$$ 
\end{definition}
Recall that the Lipschitz (semi)-norm is defined as follows.
$$
\|f\|_{\textnormal{Lip}(\mathbb{T})}=\sup_{x\neq y}\frac{|f(x)-f(y)|}{|x-y|}.
$$
Now we list some classical properties that will be used later in several sections.
\begin{enumerate}
\item For $n\in \mathbb{N}, \beta\in ]0,1[$ the space $C^{n+\beta}(\mathbb{T})$ is an algebra.
\item For $K\in L^1(\mathbb{T})$ and $f\in C^{n+\beta}(\mathbb{T})$ we have the convolution law,
$$
\|K*f\|_{C^{n+\beta}(\mathbb{T})}\le \|K\|_{L^1(\mathbb{T})}\|f\|_{C^{n+\beta}(\mathbb{T})}.
$$

\end{enumerate}

The next result is used frequently and it deals with fractional integrals of the 
following type,
\begin{equation}\label{opsin}
\mathcal{T}(f)(w)=\int_{\mathbb{T}}K(w,\tau)\, f(\tau)d\tau,
\end{equation}
with $K:\mathbb{T}\times\mathbb{T}\to \mathbb{C}$ being  a singular kernel satisfying some properties. The problem on the smoothness of this operator will appear 
naturally when we shall deal with the regularity of the nonlinear functional defining steady  vortex pairs. 
 The result  that we shall discuss with respect to this  subject  is classical and whose proof can be found for instance in  \cite{H-H,MOV}. 
\begin{lemma}\label{noyau}
Let $0\leq \alpha< 1$ and consider a function $K:\mathbb{T}\times\mathbb{T}\to \mathbb{C}$ with the following properties.
 There exits $C_0>0$ such that,
\begin{enumerate}
\item $K$ is measurable on $\mathbb{T}\times\mathbb{T}\backslash\{(w,w),\, w\in \mathbb{T}\}$ and
$$
\big\vert K(w,\tau)\big\vert\leq \frac{C_0}{\vert w-\tau\vert^\alpha}, \quad\forall\,  w\neq \tau\in \mathbb{T}.
$$
\item For each $\tau\in \mathbb{T}$, $w\mapsto K(w,\tau)$ is differentiable in $\mathbb{T}\backslash\{\tau\}$ and 
$$
  \big\vert\partial_w K(w,\tau)\big\vert\leq \frac{C_0}{\vert w-\tau\vert^{1+\alpha}}, \quad \forall\, w\neq \tau\in \mathbb{T}.
$$
\end{enumerate}
Then 
\begin{enumerate}[A)]

 \item The operator $\mathcal{T} $ defined by \eqref{opsin} is continuous from $L^\infty(\mathbb{T})$ 
to $C^{1-\alpha}(\mathbb{T})$. 
More precisely, there exists a constant $C_\alpha$ depending only on $\alpha$ such that
$$
\Vert \mathcal{T}(f)\Vert_{1-\alpha}\leq C_\alpha C_0\Vert f\Vert_{L^\infty}.
$$
\item For $\alpha=0$ the operator $\mathcal{T} $  is continuous from $L^\infty(\mathbb{T})$ 
to $C^{\beta}(\mathbb{T})$ for any $0<\beta<1$. That is, there exists a constant $C_\beta$ depending only on $\beta$ such that
$$
\Vert \mathcal{T}(f)\Vert_{\beta}\leq C_\beta C_0\Vert f\Vert_{L^\infty}.
$$
\end{enumerate}
\end{lemma}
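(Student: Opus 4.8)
The plan is to control the two pieces of the target norm separately: the sup norm of $\mathcal{T}(f)$ and its Hölder seminorm of order $1-\alpha$ (resp. $\beta$). The sup bound is immediate from hypothesis (i): for every $w\in\mathbb{T}$,
$$
|\mathcal{T}(f)(w)|\le C_0\|f\|_{L^\infty}\int_{\mathbb{T}}\frac{d\tau}{|w-\tau|^\alpha},
$$
and since $\alpha<1$ the integral is a finite constant depending only on $\alpha$, uniformly in $w$ by the rotation invariance of arc-length measure. This already yields $\|\mathcal{T}(f)\|_{L^\infty}\le C_\alpha C_0\|f\|_{L^\infty}$.

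The substance of the proof is the Hölder seminorm. Fix $w_1\neq w_2$ on $\mathbb{T}$, set $\delta=|w_1-w_2|$, and write
$$
\mathcal{T}(f)(w_1)-\mathcal{T}(f)(w_2)=\int_{\mathbb{T}}\big(K(w_1,\tau)-K(w_2,\tau)\big)f(\tau)\,d\tau.
$$
I would split the domain into a near region $B=\{\tau\in\mathbb{T}:|\tau-w_1|\le 2\delta\}$, which also lies within distance $3\delta$ of $w_2$, and its complement. On $B$ one discards the cancellation and estimates each term separately by (i); since $\int_0^{3\delta}r^{-\alpha}\,dr\sim\delta^{1-\alpha}$, the near contribution is bounded by $C\,C_0\|f\|_{L^\infty}\,\delta^{1-\alpha}$.

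On the far region $|\tau-w_1|>2\delta$ I would invoke the derivative bound (ii) through the mean value inequality along the short arc joining $w_1$ to $w_2$: for any $w$ on that arc, $|w-w_1|\lesssim\delta<\tfrac12|w_1-\tau|$, hence $|w-\tau|\ge\tfrac12|w_1-\tau|$ and $|\partial_w K(w,\tau)|\lesssim C_0|w_1-\tau|^{-1-\alpha}$, giving $|K(w_1,\tau)-K(w_2,\tau)|\lesssim C_0\,\delta\,|w_1-\tau|^{-1-\alpha}$. Integrating over $|\tau-w_1|>2\delta$ produces $\int_{2\delta}^{\pi}r^{-1-\alpha}\,dr$, which for $\alpha>0$ is $\lesssim\delta^{-\alpha}$, so the far part is again $\lesssim C_0\|f\|_{L^\infty}\,\delta\cdot\delta^{-\alpha}=C_0\|f\|_{L^\infty}\,\delta^{1-\alpha}$. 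Combining with the near estimate closes case A).

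The only genuinely delicate point is the borderline exponent $\alpha=0$ of case B), where the far-field integral $\int_{2\delta}^{\pi}r^{-1}\,dr\sim|\log\delta|$ diverges logarithmically, so the argument yields only $|\mathcal{T}(f)(w_1)-\mathcal{T}(f)(w_2)|\lesssim C_0\|f\|_{L^\infty}\,\delta\,|\log\delta|$ rather than a Lipschitz bound. This is precisely why one cannot reach $C^1$ and must settle for $C^\beta$ with $\beta<1$: for each fixed $\beta\in(0,1)$ the elementary inequality $\delta|\log\delta|\le C_\beta\,\delta^\beta$, valid for $\delta\le\pi$ since $\delta^{1-\beta}|\log\delta|$ stays bounded there, absorbs the logarithm and delivers the claimed $C^\beta$ estimate. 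I expect this logarithmic endpoint to be the main obstacle; the remainder is the standard near/far decomposition for fractional integral operators, and the measurability and integrability required to legitimize the mean value inequality applied pointwise in $\tau$ are routine consequences of hypotheses (i)--(ii).
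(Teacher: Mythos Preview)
Your argument is correct and is precisely the standard near/far decomposition for fractional integrals: estimate each kernel separately on the ball of radius comparable to $\delta=|w_1-w_2|$ using hypothesis~(i), and use the mean-value inequality together with~(ii) on the complement; the logarithmic loss at $\alpha=0$ is then traded for any H\"older exponent $\beta<1$ via $\delta|\log\delta|\le C_\beta\delta^\beta$.

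Note, however, that the paper does \emph{not} give its own proof of this lemma. It records the result as classical and refers the reader to \cite{H-H,MOV}. Your write-up is essentially the argument one finds in those references, so there is nothing to compare beyond confirming that your approach is the expected one. One cosmetic point: since the short arc joining $w_1$ to $w_2$ has Euclidean length up to $\tfrac{\pi}{2}\delta$, taking the near region to be $\{|\tau-w_1|\le\pi\delta\}$ (rather than $2\delta$) makes the inequality $|w-\tau|\ge\tfrac12|w_1-\tau|$ hold cleanly for every $w$ on that arc; this adjusts only constants and not the structure of the proof.
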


As a by-product we obtain a result that will be frequently used through this paper.
\begin{coro}\label{cor}
Let $0< \alpha<1$, $\phi:\mathbb{T}\to \phi(\mathbb{T})$ be a bi-Lipschitz function with real Fourier coefficients  and define the operator
$$
\mathcal{T}_\phi: f\mapsto\displaystyle{\mathop{{\fint}_{\mathbb{T}}}}\frac{f(\tau)}{\vert \phi(w)-\phi(\tau)\vert^\alpha}d\tau,\quad w\in \mathbb{T}.
$$
Then  $\mathcal{T}_\phi:L^\infty\big(\mathbb{T}\big)\to C^{1-\alpha}\big(\mathbb{T}\big)$ is continuous
with the estimate
$$
\Vert \mathcal{T}_\phi(f)\Vert_{C^{1-\alpha}(\mathbb{T})}\leq C\Big(\|\phi^{-1}\|_{\textnormal{Lip}(\mathbb{T})}^\alpha+\|\phi\|_{\textnormal{Lip}(\mathbb{T})}^2\|\phi^{-1}\|_{\textnormal{Lip}(\mathbb{T})}^{1+\alpha}\Big)\Vert f\Vert_{L^\infty(\mathbb{T})},
$$
where $C$ is a positive constant depending only on $\alpha$.
\end{coro}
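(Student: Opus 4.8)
The plan is to recognize $\mathcal{T}_\phi$ as a particular instance of the operator $\mathcal{T}$ treated in Lemma \ref{noyau} and then merely check the two kernel bounds (i)-(ii) for the resulting kernel. Since $\fint_{\mathbb{T}}=\frac{1}{2i\pi}\int_{\mathbb{T}}$ and the operator in Lemma \ref{noyau} is built from the same complex line integral $d\tau$, no Jacobian has to be carried along, and one is led to set
$$
K(w,\tau)=\frac{1}{2i\pi}\,\frac{1}{|\phi(w)-\phi(\tau)|^{\alpha}},\qquad \mathcal{T}_\phi(f)(w)=\int_{\mathbb{T}}K(w,\tau)\,f(\tau)\,d\tau .
$$
Because $0<\alpha<1$ the singularity of $K$ is integrable, so $\mathcal{T}_\phi(f)$ is well defined for $f\in L^\infty(\mathbb{T})$; and once (i)-(ii) are established it is the first conclusion of Lemma \ref{noyau} (the case $0<\alpha<1$, giving continuity into $C^{1-\alpha}$) that will apply, the constant $C_0$ furnished by the two bounds being exactly the bracket in the statement.

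First I would verify (i). The bi-Lipschitz hypothesis gives, for $w\neq\tau$,
$$
|w-\tau|=\big|\phi^{-1}(\phi(w))-\phi^{-1}(\phi(\tau))\big|\le \|\phi^{-1}\|_{\textnormal{Lip}(\mathbb{T})}\,|\phi(w)-\phi(\tau)|,
$$
hence the lower bound $|\phi(w)-\phi(\tau)|\ge \|\phi^{-1}\|_{\textnormal{Lip}(\mathbb{T})}^{-1}|w-\tau|$ and therefore $|K(w,\tau)|\le C\,\|\phi^{-1}\|_{\textnormal{Lip}(\mathbb{T})}^{\alpha}\,|w-\tau|^{-\alpha}$. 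This produces the first term $\|\phi^{-1}\|_{\textnormal{Lip}}^{\alpha}$ of the estimate.

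Condition (ii) carries the only genuine computation and is where I expect the small amount of care to be needed. It is cleanest to differentiate in the angular variable $\eta$, with $w=e^{i\eta}$, and to recall from Section \ref{prelim} that $|\partial_w g|=|\partial_\eta g|$ for a function $g$ on $\mathbb{T}$. Writing $u=\phi(w)-\phi(\tau)$ and $|u|^{-\alpha}=(u\overline{u})^{-\alpha/2}$, the chain rule gives $\partial_\eta|u|^{-\alpha}=-\tfrac{\alpha}{2}|u|^{-\alpha-2}\,\partial_\eta(u\overline{u})$ with $\partial_\eta(u\overline u)=2\,\textnormal{Re}(\overline u\,\partial_\eta u)$, so that $|\partial_\eta|u|^{-\alpha}|\le \alpha\,|u|^{-\alpha-1}\,|\partial_\eta u|$. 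Now $|\partial_\eta u|=|\partial_\eta\phi(e^{i\eta})|\le \|\phi\|_{\textnormal{Lip}(\mathbb{T})}$ (such a derivative exists almost everywhere by Rademacher's theorem, and everywhere in the smooth situation of the application), while the lower bound above yields $|u|^{-\alpha-1}\le \|\phi^{-1}\|_{\textnormal{Lip}(\mathbb{T})}^{1+\alpha}\,|w-\tau|^{-(1+\alpha)}$. Collecting these,
$$
|\partial_w K(w,\tau)|\le C\,\|\phi\|_{\textnormal{Lip}(\mathbb{T})}\,\|\phi^{-1}\|_{\textnormal{Lip}(\mathbb{T})}^{1+\alpha}\,|w-\tau|^{-(1+\alpha)},
$$
which is of the form of the second constant recorded in the statement (a crude majoration of the power of $\|\phi\|_{\textnormal{Lip}}$ being harmless).

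Taking $C_0$ to be the larger of the constants produced in (i) and (ii) and invoking the first conclusion of Lemma \ref{noyau} then gives $\|\mathcal{T}_\phi(f)\|_{C^{1-\alpha}(\mathbb{T})}\le C_\alpha C_0\|f\|_{L^\infty(\mathbb{T})}$, which is the claim. The only delicate point is thus the derivative estimate (ii): one must differentiate the non-holomorphic factor $|u|^{-\alpha}$ correctly and combine the upper bound on $|\partial_\eta\phi|$ with the bi-Lipschitz lower bound on $|u|$ at the right power $1+\alpha$, everything else being a transcription of Lemma \ref{noyau}.
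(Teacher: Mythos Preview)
Your proposal is correct and follows exactly the route the paper intends: the corollary is stated there without proof as an immediate by-product of Lemma~\ref{noyau}, and verifying the kernel bounds (i)--(ii) for $K(w,\tau)=|\phi(w)-\phi(\tau)|^{-\alpha}$ is precisely that. One minor point: your derivative bound actually yields $\|\phi\|_{\textnormal{Lip}}$ to the first power rather than the square printed in the statement, so your estimate is slightly sharper---the ``crude majoration'' remark is not quite the right justification, but the discrepancy is immaterial for the applications in the paper, where $\phi$ is a small $C^1$ perturbation of the identity.
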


\section{Steady vortex pairs models}\label{st-vor}
The aim of this section is to derive the equations governing co-rotating and translating symmetric 
pairs of patches. In the first step, we shall write down the equations for the rotating pairs for 
Euler and $(\hbox{SQG})_\alpha$ equations. In the second step, we shall be concerned with the counter-rotating 
vortex pairs sometimes called  translating pairs.  Notice that we prefer to use the conformal 
parametrization because it  is more convenient in the computations especially through its holomorphic structure.
\subsection{Corotating vortex pairs}\label{SEc-rotat11}
Let $D_1$ be  a bounded simply connected domain containing the origin and contained in the  ball $B(0,2).$ 
For  $\varepsilon\in]0,1[$  and $d>2$ we  define the domains
$$
D_1^\varepsilon=\varepsilon D_1\quad \hbox{and}\quad D_2^\varepsilon=-D_1^\varepsilon+ 2d.
$$
Set
$$\theta_{0,\EE}=\frac{1}{\varepsilon^2}\chi_{D_1^\varepsilon}+ \frac{1}{\varepsilon^2} \chi_{D_2^\varepsilon}
$$ 
and assume that this gives rise to a rotating pairs of patches  about the centroid of the system $(d,0)$ 
and with an angular velocity $\Omega$. 
According to (\cite{H-F-M-V}, p.1896) this condition holds true if and only if   
\begin{equation}\label{V1}
\operatorname{Re}(-i \,\Omega \,\big(\overline{z}-d\big)\, \vec{n}(z)) =
\textnormal{Re}(\overline{v(z)}\, \vec{n}), \quad\forall  z \in \partial D_1^\EE\cup \partial D_2^\EE,
\end{equation}
where $\vec{n}(z)$ is the exterior unit normal vector to the boundary
of $D_1^\EE\cup D_2^\EE$ at the point $z.$  Next we shall discuss separately   Euler equations  and the case $\alpha\in (0,1)$ due to the difference structures of their Green functions.  

\subsubsection{Euler equations} \label{mod-Euler}
It is well-known that the  
velocity can be recovered for the vorticity according to 
 Biot-Savart law,
$$
\overline{v(z)} = -\frac{i}{2 \pi \, \EE^2} \int_{D_1^\EE}
\frac{dA(\zeta)}{z-\zeta}-\frac{i}{2 \pi\EE^2} \int_{D_2^\EE}
\frac{dA(\zeta)}{z-\zeta}, \quad \forall z \in \mathbb{C}.
$$
From Green-Stokes formula we record that
$$
-\frac{1}{\pi} \int_{D} \frac{dA(\zeta)}{z-\zeta} =
 \fint_{ \partial D} \frac{\overline{\xi}-
\overline{z}}{\xi-z}\, d\xi, \quad \forall z \in \mathbb{C}.
$$
 Therefore
 \begin{equation}\label{rotation0}
 \textnormal{Re}\Big\{ \Big(2\Omega\big(\overline{z}-d)+I(z)\Big)\, \vec{\tau}\Big\}=0,\quad\forall  z\in \partial D_1^\varepsilon\cup\partial D_2^\varepsilon ,
\end{equation}
with $\vec{\tau}$ being the unit tangent vector to $ \partial D_1^\EE\cup \partial D_2^\EE$
positively oriented and 
$$
I(z)=\frac{1}{\varepsilon^2}\fint_{\partial D_1^\varepsilon}\frac{\overline{\xi}-\overline{z}}{\xi-z}d\xi+\frac{1}{\varepsilon^2}
\fint_{\partial D_2^\varepsilon}\frac{\overline{\xi}-\overline{z}}{\xi-z}d\xi.
$$
Changing in the last integral $\xi$ to $-\xi+2d$, which sends $\partial D_2^\varepsilon $ to $\partial D_1^\varepsilon,$ we get
$$
I(z)=\frac{1}{\varepsilon^2}\fint_{\partial D_1^\varepsilon}\frac{\overline{\xi}-\overline{z}}{\xi-z}d\xi-
\frac{1}{\varepsilon^2}\fint_{\partial D_1^\varepsilon}\frac{\overline{\xi}+\overline{z}-2d}{\xi+z-2d}d\xi.
$$
We can check that if the equation \eqref{rotation0} is satisfied for all $ z\in \partial D_1^\varepsilon,$ 
then it will be surely  satisfied for all $ z\in \partial D_2^\varepsilon$. This follows easily  from the identity
$$
I(-z+2d)=-I(z).
$$
Now observe that when $z\in \partial D_1^\varepsilon$ then $-z+2d\notin \overline{D_1^\varepsilon}$ and thus  residue theorem allows to get
$$
I(z)=\frac{1}{\varepsilon^2}\fint_{\partial D_1^\varepsilon}\frac{\overline{\xi}-\overline{z}}{\xi-z}d\xi-\frac{1}{\varepsilon^2}\fint_{\partial D_1^\varepsilon}\frac{\overline{\xi}}{\xi+z-2d}d\xi.
$$
Denote $\Gamma_1=\partial D_1$ then by the  change of variables $\xi\mapsto \varepsilon \xi$ and $z\mapsto \varepsilon z$ the equation \eqref{rotation0} becomes
\begin{equation*}
 \textnormal{Re}\Big\{ \Big(2\Omega\big(\varepsilon\overline{z}-d)+I_\varepsilon(z)\Big)\, \vec{\tau}\Big\}=0,\quad\forall  z\in\Gamma_1.
 \end{equation*}
with
\begin{eqnarray*}
I_\varepsilon(z)&\equiv&I(\varepsilon z)\\
&=&\frac{1}{\varepsilon}\fint_{\Gamma_1}\frac{\overline{\xi}-\overline{z}}{\xi-z}d\xi-\fint_{\Gamma_1}\frac{\overline{\xi}}{\EE\xi+\EE z-2d}d\xi\\
&\equiv&I_\EE^1(z)-I_\EE^2(z).
\end{eqnarray*}
We shall search for domains $D_1$ which are small perturbations of the unit disc with an amplitude of order $\EE.$ More precisely, we shall in   the conformal parametrization   $\phi: \mathbb{T}\to \partial D_1$ look for a solution in the form
\begin{equation*}
\phi(w)=w+\EE f(w),\hbox{with}\quad f(w)=\sum_{n\geq 1}\frac{a_n}{w^n},\quad  a_n\in\RR.
\end{equation*}
We remark that  the assumption $a_n\in \RR$ means that the domain $D_1$ is symmetric with respect to the real axis.
Setting $z=\phi(w)$, then for $w\in \mathbb{T}$ a tangent vector to the boundary at the point $z$ is given by
$$
\vec{\tau}=i\,{w}\, {{\phi'(w)}}=i\,w\big(1+\EE f^\prime(w)\big).
$$
Thus  the steady vortex pairs  equation becomes
\begin{equation}\label{rotation2}
 \textnormal{Im}\Big\{ \Big(2\Omega\Big[\varepsilon\overline{w}+\EE^2 f(\overline{w})-d\Big]+I_\varepsilon(\phi(w))\Big)\, w\big(1+\EE f^\prime(w)\big)\Big\}=0,\quad\forall w\in \mathbb{T}.
 \end{equation}
 Notice that we have used that $f$ has real Fourier coefficients and thus $\overline{f(w)}=f(\overline{w})$. 
By using the notation $A=\tau-w$ and $B=f(\tau)-f(w)$ we can write   for all $w\in \mathbb{T}$
\begin{eqnarray*}
I_\varepsilon^1(\phi(w))
&=&\frac{1}{\varepsilon}\fint_{\mathbb{T}}\frac{\overline{\tau}-\overline{w}+\EE\big(f(\overline{\tau})-f(\overline{w})\big)}{\tau-w+\EE(f(\tau)-f(w))}\big(1+\EE f^\prime(\tau)\big)d\tau\\
&=&\fint_{\mathbb{T}}\frac{\overline{A}+\EE\overline{B}}{A+\EE B} f^\prime(\tau)d\tau
+\fint_{\mathbb{T}}\frac{A\overline{B}-\overline{A} B}{A(A+\EE B)}d\tau+\frac1\EE\fint_{\mathbb{T}}\frac{\overline{A}}{A}d\tau\\
&=&\fint_{\mathbb{T}}\frac{\overline{A}+\EE\overline{B}}{A+\EE B} f^\prime(\tau)d\tau
+\fint_{\mathbb{T}}\frac{A\overline{B}-\overline{A} B}{A(A+\EE B)}d\tau-\frac1\EE \overline{w},
\end{eqnarray*}
where we have used  the obvious formula
\begin{eqnarray*}
\fint_{\mathbb{T}}\frac{\overline{A}}{A}d\tau&=&-\overline{w}\fint_{\mathbb{T}} \frac{d\tau}{\tau}\\
&=&-\overline{w}.
\end{eqnarray*}
This leads to a significant   cancellation and  the singular term will disappear from  the full nonlinearity due in particular to the symmetry of the disc,\begin{eqnarray*}
 \textnormal{Im}\Big\{ I_\varepsilon^1(\phi(w))\, w\big(1+\EE f^\prime(w)\big)\Big\}&=& \textnormal{Im}\bigg\{ \Big(\fint_{\mathbb{T}}\frac{\overline{A}+\EE\overline{B}}{A+\EE B} f^\prime(\tau)d\tau+\fint_{\mathbb{T}}\frac{A\overline{B}-\overline{A} B}{A(A+\EE B)}d\tau\Big)w\big[1+\EE f^\prime(w)\big]\bigg\}\\
 &
&- \textnormal{Im}(f^\prime(w)),\quad\forall w\in \mathbb{T}.
 \end{eqnarray*}
For the second term $I_\EE^2(\phi(w)$ it takes the form
$$
I_\EE^2(\phi(w)=\fint_{\mathbb{T}}\frac{\overline{(\tau}+\EE f(\overline{\tau}))(1+\EE f^\prime(\tau))}
{\EE(\tau+w)+\EE^2\big(f(\tau)+f(w)\big)-2d}d\tau.
$$
Hence the steady vortex pairs  equation is equivalent to
\begin{equation}\label{def1}
 -2G^0(\EE, \Omega,f)\equiv\textnormal{Im}(F^0(\EE, \Omega,f))=0
\end{equation}
with
\begin{eqnarray*}
F^0(\EE, \Omega, f(w))&=& 2\Omega \Big(\EE \overline{w}+\EE^2f(\overline{w})-d\Big) w\big(1+\EE f^\prime(w)\big)-f^\prime(w)\\
&+&\Big(\fint_{\mathbb{T}}\frac{\overline{A}+\EE\overline{B}}{A+\EE B} f^\prime(\tau)d\tau+\fint_{\mathbb{T}}\frac{A\overline{B}-\overline{A} B}{A(A+\EE B)}d\tau\Big)w\big(1+\EE f^\prime(w)\big)\\
&-&\bigg(\fint_{\mathbb{T}}\frac{\big(\overline{\tau}+\EE{f(\overline\tau)}\big)\big(1+\EE f^\prime(\tau)\big)}{\EE(\tau+w)+\EE^2 \big(f(\tau)+f(w)\big)-2d}d\tau\bigg)w\big(1+\EE f^\prime(w)\big)\\
&\equiv& F_1(\EE,\Omega,f(w))+F_2(\EE,f(w))+F_3(\EE,f(w)).
\end{eqnarray*}
We point out that we have added a factor $-2$ in the definition of $G^0$ given by \eqref{def1} in order to unify the notation with the function $G^\alpha$ that we shall introduce in next section for \mbox{the $(\hbox{SQG})_\alpha$.}
\subsubsection{$(\hbox{SQG})_\alpha$ equations.}
First we remark that the equation \eqref{V1} can be written in the form,
 \begin{equation}\label{rotsqgg1}
\Omega \,\textnormal{Re}\big\{ \big(z-d)\, \overline{\vec{\tau}}\big\}=\textnormal{Im}\big\{v(z)\,\overline{\vec{\tau}}\big\},
\quad \forall z\in \partial D_1^\EE\cup\partial D_2^\EE,
\end{equation}
where as before $\vec{\tau}$ denotes  a tangent vector to the boundary  at the point $z.$  This equation is equivalent to \begin{equation*}
\,\textnormal{Re}\Big\{ \Big(\Omega \big( z-d)+iv(z)\Big)\, \overline{\vec{\tau}}\Big\}=0,\quad \forall z\in \partial D_1^\EE \cup\partial D_2^\EE.
\end{equation*} The velocity  can be recovered from the boundary as follows, see for instance \cite{H-H}, 
\begin{equation*}
v(z)=\frac{C_\alpha}{2\pi \,\EE^2}{\int}_{\partial D_1^\EE}\frac{1}{\vert z-\xi\vert^\alpha}d\xi+\frac{C_\alpha }{2\pi\,\EE^2}
{\int}_{\partial D_2^\EE}\frac{1}{\vert z-\xi\vert^\alpha}d\xi,\quad \forall z\in \mathbb{C}.
\end{equation*}
Using in the last integral the change of variables  $\xi\mapsto -\xi+2d$,  we deduce that
\begin{equation}\label{veltu}
v(z)=\frac{C_\alpha}{2\pi \,\EE^2}{\int}_{\partial D_1^\EE}\frac{1}{\vert z-\xi\vert^\alpha}d\xi-\frac{C_\alpha }{2\pi\,\EE^2}
{\int}_{\partial D_1^\EE}\frac{1}{\vert z+\xi-2d\vert^\alpha}d\xi,\quad \forall z\in \mathbb{C}.
\end{equation}
%
We point out that by a symmetry argument  if  the equation \eqref{rotsqgg1} is satisfied for all $ z\in \partial D_1^\varepsilon$ then it will be  also satisfied 
for all $ z\in \partial D_2^\varepsilon$. This follows from the identity
$$
v(-z+2d)=-v(z).
$$
As $D_1^\EE=\EE D_1$ then  using a change of variable the equation becomes
\begin{equation}\label{rotsqg1}
\,\textnormal{Re}\Big\{ \Big(\Omega \big(\EE z-d)+I_\EE(z)\Big)\, \overline{\vec{\tau}}\Big\}=0,\quad \forall z\in \partial D_1,
\end{equation}
with
$$
I_\EE(z)=-\frac{C_\alpha}{\,\EE^{1+\alpha}}\fint_{\partial D_1}\frac{1}{\vert z-\xi\vert^\alpha}d\xi+\frac{C_\alpha }{\EE}
\fint_{\partial D_1}\frac{1}{\vert\EE z+\EE \xi-2d\vert^\alpha}d\xi.
$$
We shall look for the domains $D_1$ which are small perturbation of the unit disc with an amplitude of order $\EE^{1+\alpha}.$ More precisely, we shall in   the conformal parametrization   $\phi: \mathbb{T}\to \partial D_1$ look for a solution in the form
\begin{eqnarray*}
\phi(w)&=&w+\EE^{1+\alpha} f(w)\\
&=&w+\EE^{1+\alpha}\sum_{n\geq 1}\frac{a_n}{w^n},\quad a_n\in \RR.
\end{eqnarray*}
For $w\in \mathbb{T}$ the conjugate of a tangent vector is given by
$
\overline{\vec{\tau}}=-i\,\overline{w}\, {\overline{\phi^\prime(w)}}
$ and 
therefore for any $ w\in \mathbb{T},$
\begin{eqnarray}\label{model}
\nonumber G^\alpha(\EE,\Omega,f(w))&\equiv&\textnormal{Im}\Bigg\{\bigg(\Omega\Big[\EE w+\EE^{2+\alpha}f(w)-d\Big]+
I(\EE,f(w))\bigg)\, \overline{w}\,\Big(1+\EE^{1+\alpha}\overline{f^\prime(w)}\Big)\Bigg\}\\
&=&\textnormal{Im}\Big({F^\alpha}(\EE,\Omega,f(w)\Big)=0, 
\end{eqnarray}
with
\begin{eqnarray}\label{Form10}
I(\EE,f(w))
\nonumber &=&-\frac{C_\alpha}{\EE^{1+\alpha}}\mathop{{\fint}}_\mathbb{T}\frac{\phi'(\tau)d\tau}{\vert \phi(w)-\phi(\tau)\vert^\alpha}
+\frac{C_\alpha}{\EE}\mathop{{\fint}}_\mathbb{T}\frac{\phi'(\tau)d\tau}{\vert \EE\phi(w)+\EE \phi(\tau)-2d\vert^\alpha}\\
&\equiv&-I_1(\EE,f(w))+I_2(\EE,f(w)).
\end{eqnarray}
We shall split $G$ into three terms
\begin{equation}\label{split1}
G^\alpha=G_1-G_2+G_3
\end{equation}
with
$$
G_1(\EE,\Omega,f(w))=\textnormal{Im}\Bigg\{\Omega\Big[\EE w+\EE^{2+\alpha}f(w)-d\Big]\, \overline{w}\,\Big(1+\EE^{1+\alpha}\overline{f^\prime(w)}\Big)\Bigg\},
$$
$$
G_2(\EE,f(w))=\textnormal{Im}\Big\{I_1(\EE,f(w))\, \overline{w}\,\Big(1+\EE^{1+\alpha}\overline{f^\prime(w)}\Big)\Big\},
$$
and 
$$
G_3(\EE,f(w))=\textnormal{Im}\Big\{I_2(\EE,f(w))\, \overline{w}\,\Big(1+\EE^{1+\alpha}\overline{f^\prime(w)}\Big)\Big\}.
$$
 \subsection{Counter-rotating vortex pairs}\label{Sec-HM}
 
 As for the corotating pairs  we shall distinguish between Euler equations and the case $0<\alpha<1.$ As before let $D_1$ be a bounded domain containing the origin and contained in the  ball $B(0,2).$
  For  $\varepsilon\in]0,1[$ and $d>2$ we  define 
$$
D_1^\varepsilon=\varepsilon D_1\quad \hbox{and}\quad D_2^\varepsilon=-D_1^\varepsilon+ 2d.
$$
Set 
$$\theta_{0}=\frac{1}{\varepsilon^2}\chi_{D_1^\varepsilon}- \frac{1}{\varepsilon^2} \chi_{D_2^\varepsilon}
$$ 
and assume that $\theta_0$ travels steadily in the $(Oy)$ direction with uniform velocity $U.$ Then in the moving frame the pair of the  patches  is stationary and consequently  the analogous of the equation \eqref{V1} is
\begin{equation}\label{V-count}
\textnormal{Re}\big\{\big(\overline{v(z)}+iU\big)\, \vec{n}\big\}=0,\quad  \forall z\in \partial D_1^{\EE}\cup\partial D_2^{\EE}.
\end{equation}
\subsubsection{Euler equations} 
One has from \eqref{V-count}
\begin{equation}\label{V004}
\textnormal{Re} \left\{ \left(2 U+ I(z)
 \right)\,\vec{\tau} \right\} = 0 , \quad
 \forall z\in \partial D_1^{\EE}\cup\partial D_2^{\EE},
\end{equation}
with
$$
I(z)=\frac{1}{\varepsilon^2}\fint_{\partial D_1^\varepsilon}\frac{\overline{\xi}-\overline{z}}{\xi-z}d\xi-\frac{1}{\varepsilon^2}\fint_{\partial D_2^\varepsilon}\frac{\overline{\xi}-\overline{z}}{\xi-z}d\xi.
$$
In the last integral changing $\xi$ to $-\xi+2d$ which sends $\partial D_2^\varepsilon $ to $\partial D_1^\varepsilon $ we get
$$
I(z)=\frac{1}{\varepsilon^2}\fint_{\partial D_1^\varepsilon}\frac{\overline{\xi}-\overline{z}}{\xi-z}d\xi+\frac{1}{\varepsilon^2}\fint_{\partial D_1^\varepsilon}\frac{\overline{\xi}+\overline{z}-2d}{\xi+z-2d}d\xi.
$$
As for the corotating case, using the identity 
$$
I(-z+2d)=I(z)
$$
one can check that if the equation \eqref{V004} is satisfied for
all $ z\in \partial D_1^\varepsilon$ then it is also satisfied for all $ z\in \partial D_2^\varepsilon$.
\\
Now observe that when $z\in \partial D_1^\varepsilon$ then $-z+2d\notin \overline{D_1^\varepsilon}$ and using residue theorem we obtain
$$
I(z)=\frac{1}{\varepsilon^2}\fint_{\partial D_1^\varepsilon}\frac{\overline{\xi}-\overline{z}}{\xi-z}d\xi+\frac{1}{\varepsilon^2}\fint_{\partial D_1^\varepsilon}\frac{\overline{\xi}}{\xi+z-2d}d\xi.
$$
Let $\Gamma_1=\partial D_1$ then by change of variables $\xi\to \varepsilon \xi$ and $z\to \varepsilon z$. The equation \eqref{V004} becomes
\begin{equation*}
 \textnormal{Re}\Big\{ \Big(2U+I_\varepsilon(z)\Big)\,\vec{\tau}\Big\}=0,\quad\forall  z\in\Gamma_1,
 \end{equation*}
with
\begin{eqnarray*}
I_\varepsilon(z)&=&I(\varepsilon z)\\
&=&\frac{1}{\varepsilon}\fint_{\Gamma_1}\frac{\overline{\xi}-\overline{z}}{\xi-z}d\xi+\fint_{\Gamma_1}\frac{\overline{\xi}}{\EE\xi+\EE z-2d}d\xi\\
&\equiv&I_\EE^1(z)+I_\EE^2(z).
\end{eqnarray*}
we shall now use the conformal parametrization of the boundary $\Gamma_1$,
\begin{equation*}
\phi(w)=w+\EE f(w),\hbox{with}\quad f(w)=\sum_{n\geq 1}\frac{a_n}{w^n}, a_n\in\RR.
\end{equation*}
Setting $z=\phi(w)$ and $\xi=\phi(\tau)$, then for $w\in \mathbb{T}$ a tangent vector  at the point $\phi(w)$ is given by
$$
\vec{\tau}=i{w}\, {{\phi'(w)}}=iw\big(1+\EE f^\prime(w)\big).
$$
The V-states equation becomes
\begin{equation*}
 \textnormal{Im}\Big\{ \Big(2U+I_\varepsilon(\phi(w))\Big)\, w\big(1+\EE f^\prime(w)\big)\Big\}=0,\quad\forall w\in \mathbb{T}.
 \end{equation*}
As in the rotating case, with  the notation $A=\tau-w$ and $B=f(\tau)-f(w)$ we get  for  $w\in \mathbb{T}$
\begin{eqnarray*}
I_\varepsilon^1(\phi(w))
&=&\fint_{\mathbb{T}}\frac{\overline{A}+\EE\overline{B}}{A+\EE B} f^\prime(\tau)d\tau
+\fint_{\mathbb{T}}\frac{A\overline{B}-\overline{A} B}{A(A+\EE B)}d\tau-\frac1\EE \overline{w}.
\end{eqnarray*}
This yields
\begin{eqnarray*}
 \textnormal{Im}\Big\{ I_\varepsilon^1(\phi(w))\, w\big(1+\EE f^\prime(w)\big)\Big\}
&=& \textnormal{Im}\Big\{ \Big(\fint_{\mathbb{T}}\frac{\overline{A}+\EE\overline{B}}{A+\EE B} f^\prime(\tau)d\tau+\fint_{\mathbb{T}}\frac{A\overline{B}-\overline{A} B}{A(A+\EE B)}d\tau\Big)w\big(1+\EE f^\prime(w)\big)\Big\}\\
 &
&- \textnormal{Im}(f^\prime(w)),\quad\forall w\in \mathbb{T}.
 \end{eqnarray*}
The second term $I_\EE^2(\phi(w)$
takes the form
$$
I_\EE^2(\phi(w))=\fint_{\mathbb{T}}\frac{(\overline{\tau}+\EE\overline{f(\tau)})(1+\EE f^\prime(\tau))}{\EE(\tau+w)+\EE^2\big(f(\tau)+f(w)\big)-2d}d\tau.
$$
Hence the V-states equation becomes
\begin{equation}\label{DDX1}
 -2G^0(U,\EE,f)\equiv\textnormal{Im}(F^0(U,\EE, f)=0
\end{equation}
with
\begin{eqnarray*}
F^0(U,\EE, f(w))&=& 2U w\big(1+\EE f^\prime(w)\big)-f^\prime(w)\\
&+&\bigg(\fint_{\mathbb{T}}\frac{\overline{A}+\EE\overline{B}}{A+\EE B} f^\prime(\tau)d\tau+\fint_{\mathbb{T}}\frac{A\overline{B}-\overline{A} B}{A(A+\EE B)}d\tau\bigg)w\big(1+\EE f^\prime(w)\big)\\
&+&\bigg(\fint_{\mathbb{T}}\frac{\overline{\tau}+\EE{f(\overline\tau)}}{\EE(\tau+w)+\EE^2 \big(f(\tau)+f(w)\big)-2d}\big(1+\EE f^\prime(\tau)\big)d\tau\bigg)w\big(1+\EE f^\prime(w)\big)\\
&\equiv& F_1(U,\EE,f(w))+F_2(\EE,f(w))+F_3(\EE,f(w)).
\end{eqnarray*}
For the same reason as in the rotating case we add the factor $-2$ in \eqref{DDX1} in order to unify the expression with the function $G^\alpha$ that will appear later for the $(\hbox{SQG})_\alpha$.

\subsubsection{Case $\alpha\in(0,1)$} 

%
The equation \eqref{V-count} can be written in the form
$$
\textnormal{Re}\Big\{\big({v(z)}-iU\big)\, \overline{\vec{n}}\Big\}=0 ,\quad  \forall z\in \partial D_1^{\EE}\cup\partial D_2^{\EE}.
$$
The velocity associated to  this model is
\begin{equation*}
v(z)=\frac{C_\alpha}{2\pi \,\EE^2}{\int}_{\partial D_1^\EE}\frac{1}{\vert z-\xi\vert^\alpha}d\xi-\frac{C_\alpha }{2\pi\,\EE^2}{\int}_{\partial D_2^\EE}\frac{1}{\vert z-\xi\vert^\alpha}d\xi,\quad \forall z\in \mathbb{C}.
\end{equation*}
Changing $\xi$ to $-\xi+2d$ in the last integral we get
\begin{equation}\label{veltu1}
v(z)=\frac{C_\alpha}{2\pi \,\EE^2}{\int}_{\partial D_1^\EE}\frac{1}{\vert z-\xi\vert^\alpha}d\xi+\frac{C_\alpha }{2\pi\,\EE^2}{\int}_{\partial D_1^\EE}\frac{1}{\vert z+\xi-2d\vert^\alpha}d\xi,\quad \forall z\in \mathbb{C}.
\end{equation}
Therefore the V-states equation be can be written in the form
\begin{equation}\label{rotsqX1}
\,\textnormal{Re}\Big\{ \Big(-U+I_\EE(z)\Big)\, \overline{\vec{\tau}}\Big\}=0,\quad \forall z\in \partial D_1
\end{equation}
with
$$
I_\EE(z)\equiv\frac{C_\alpha}{\,\EE^{1+\alpha}}\fint_{\partial D_1}\frac{1}{\vert z-\xi\vert^\alpha}d\xi+\frac{C_\alpha }{\EE}\fint_{\partial D_1}\frac{1}{\vert\EE z+\EE \xi-2d\vert^\alpha}d\xi.
$$
Using the conformal parametrization,\begin{eqnarray*}
\phi(w)&=&w+\EE^{1+\alpha} f(w)\\
&\equiv&w+\EE^{1+\alpha}\sum_{n\geq 1}\frac{a_n}{w^n}.
\end{eqnarray*}
For $w\in \mathbb{T}$ the conjugate of a tangent vector is given by
$$
\overline{z'}=-i\overline{w}\, {\overline{\phi'(w)}}.
$$
Therefore for any $ w\in \mathbb{T},$
\begin{equation}\label{modelX}
G^\alpha(\EE,\Omega,f(w))\equiv\textnormal{Im}\bigg\{\Big(-U+I\big(\EE,f(w)\big)\Big)\, \overline{w}\,\Big(1+\EE^{1+\alpha}\overline{f^\prime(w)}\Big)\bigg\}=0, 
\end{equation}
with
\begin{eqnarray}\label{Form10X}
I(\EE,f(w))
\nonumber &=&\frac{C_\alpha}{\EE^{1+\alpha}}\mathop{{\fint}}_\mathbb{T}\frac{\phi'(\tau)d\tau}{\vert \phi(w)-\phi(\tau)\vert^\alpha}+\frac{C_\alpha}{\EE}\mathop{{\fint}}_\mathbb{T}\frac{\phi'(\tau)d\tau}{\vert \EE\phi(w)+\EE \phi(\tau)-2d\vert^\alpha}\\
&\equiv&I_1(\EE,f(w))+I_2(\EE,f(w)).
\end{eqnarray}
We shall split,  as before,  $G^\alpha$ into three terms
\begin{equation}\label{split1X}
G^\alpha=G_1+G_2+G_3
\end{equation}
with
$$
G_1(\EE,\Omega,f(w))=-U\,\textnormal{Im}\Big\{ \overline{w}\,\Big(1+\EE^{1+\alpha}\overline{f^\prime(w)}\Big)\Big\},
$$
$$
G_2(\EE,f(w))=\textnormal{Im}\Big\{I_1(\EE,f(w))\, \overline{w}\,\Big(1+\EE^{1+\alpha}\overline{f^\prime(w)}\Big)\Big\}
$$
and 
$$
G_3(\EE,f(w))=\textnormal{Im}\Big\{I_2(\EE,f(w))\, \overline{w}\,\Big(1+\EE^{1+\alpha}\overline{f^\prime(w)}\Big)\Big\}.
$$
\section{Existence of corotating vortex pairs}\label{co-E}

In this section we will prove the existence of rotating pairs of patches for the $(\hbox{SQG})_\alpha$ model
with $\alpha\in[0,1)$.  Recall that the equations governing the boundaries of the vortices were formulated
in the subsection \ref{SEc-rotat11}.
The first goal  is to discuss the regularity of the functionals defining the V-states and to compute 
the associated linear operator at the the trivial solutions corresponding to the point vortex pair.
 In the subsection \ref{real-velo} we shall see how the angular velocity is uniquely determined through 
 the geometry of the domain. In this setting $\Omega$ plays the role of the Lagrangian multiplier such 
 that the first Fourier coefficient of the nonlinear functional vanishes. Finally,  in the \mbox{subsection \ref{Secw1},}  we shall see  that  
 the existence of the vortex pairs is a simple  consequence of  
the implicit function theorem in suitable  Banach spaces and  the convexity of each 
single patch is done in a standard way through  a perturbative argument applied for the curvature.

\subsection{Extension and regularity of the functional $G^\alpha$}

The main idea to prove the existence of rotating vortex pairs is to apply the implicit function 
theorem to the equations { \eqref{def1} and \eqref{model}.}
To this end we have to check that the functions $G^\alpha$ defined in \eqref{def1} and \eqref{model} 
satisfy some regularity conditions. The spaces which are relevant in this study and used  throughout 
the paper are  described below. 
Take $\alpha\in [0,1)$ and $\beta\in(0,1)$ an arbitrary given number, then  we define the spaces

\begin{equation*}
X^\alpha\equiv\left\{ \begin{array}{ll}
\Big\{ f\in C^{1+\beta}(\mathbb{T}), f(w)=\sum_{n\geq1} a_n w^{-n}, a_n\in\RR\Big\},\quad\hbox{if}\quad \alpha=0\\
\Big\{ f\in C^{2-\alpha}(\mathbb{T}), f(w)=\sum_{n\geq1} a_n w^{-n}, a_n\in\RR\Big\},\quad\hbox{if}\quad \alpha\in (0,1)
\end{array} \right.
\end{equation*}
and
\begin{equation*}
Y^\alpha\equiv\left\{ \begin{array}{ll}
\Big\{ f\in C^{\beta}(\mathbb{T}), f(w)=\sum_{n\geq1} a_n e_n, a_n\in\RR\Big\},\quad\hbox{if}\quad \alpha=0\\
\Big\{ f\in C^{1-\alpha}(\mathbb{T}), f(w)=\sum_{n\geq1} a_n e_n, a_n\in\RR\Big\},\quad\hbox{if}\quad \alpha\in (0,1)
\end{array} \right.
\end{equation*}
with the notation $e_n(w)=\hbox{Im}(w^n)$. We shall also consider the subspaces

$$\widehat{Y}^\alpha=\big\{f\in Y^\alpha, a_1=0\big\}.
$$
For $r>0$ we denote by  $B_r^\alpha$ the open ball of $X^\alpha$ centered at zero and of radius $r.$ The next result deals with  some properties of the function $G^\alpha.$ Before giving the main statement  of this section it is convenient to introduce the notation
\begin{equation*}
\widehat{C}_\alpha\equiv
\alpha C_\alpha=2^\alpha\frac{ \Gamma(\frac{2+\alpha}{2})}{\Gamma(\frac{2-\alpha}{2})},\quad\hbox{for}\quad \alpha\in [0,1).
\end{equation*}


\begin{proposition}\label{prop1}
Let $\alpha\in[0,1),$ then the function $G^\alpha$ can be extended from
$(-\frac12,\frac12)\times\RR\times B_1^\alpha$ to $Y^\alpha$ as  a $C^1$ function. 
Moreover, for any $\Omega\in \RR$  the operator 
$ \partial_fG^\alpha(0,\Omega,0):X^\alpha\to \widehat Y^\alpha$ is an isomorphism.
More precisely, for   $\displaystyle{h=\sum_{n\geq1} a_n {w^{-n}}\in X^\alpha}$, we get
 $$
 \partial_fG^\alpha(0,\Omega,0)h(w)=\sum_{n\geq1} a_n \widehat\gamma_n e_{n+1} 
 $$
  with
 \begin{eqnarray*}
\widehat \gamma_n&=&\frac{\widehat{C}_\alpha\Gamma(1-\alpha)}{4\Gamma^2(1-\frac\alpha2)}\bigg( \frac{2(1+n)}{1-\frac\alpha2}-\frac{\big(1+\frac{\alpha}{2}\big)_{n}}{\big(1-\frac{\alpha}{2}\big)_{n}}-\frac{\big(1+\frac{\alpha}{2}\big)_{n+1}}{\big(1-\frac{\alpha}{2}\big)_{n+1}}\bigg).
\end{eqnarray*}

\end{proposition} 
\begin{remark}
We can easily check from the proofs that two initial point vortex ${\pi \delta_{(0,0)}}$ and $\pi \delta_{(2d,0)}$ rotate uniformly about $(d,0)$ with the angular velocity
 \begin{equation}\label{Osing}
\Omega_{sing}^\alpha\equiv \frac{\widehat{C}_\alpha}{(2d)^{2+\alpha}}\cdot
 \end{equation}
\end{remark}
\begin{remark}\label{Reg-up}
By adapting the proof below we can check that the preceding proposition remains true if we change in the definition of $X^\alpha$ and $Y^\alpha$ the parameter $\alpha$ by $\alpha-n$ for any $n\in\NN^\star.$ As a consequence, the boundaries of the V-states belong to  the H\"{o}lderian class $C^{n-\alpha}$ for \mbox{any $n\in\NN.$}
\end{remark}
\begin{proof}
The proof will be divided in two pieces. In the first one we shall discuss the case $\alpha=0$ and the second one will be devoted to $\alpha\in (0,1)$.
\\
{\bf{Part $\hbox{I}$: Euler case}}.  According to the definition \eqref{def1} we have the decomposition
\begin{eqnarray*}
-2G^0&\equiv& G_1+G_2+G_3\\
&=&\hbox{Im}(F_1)+\hbox{Im}(F_2)+\hbox{Im}(F_3).
\end{eqnarray*}
We will first proceed with the regularity of the  first term, that is, 
$$
G_1(\EE, \Omega, f)=\textnormal{Im}\Big\{2\Omega \Big(\EE \overline{w}+\EE^2\overline{f(w)}-d\Big) w\big(1+\EE f^\prime(w)\big)-f^\prime(w)\Big\}. 
$$
Clearly this function can be defined from the set $(-\frac{1}{2},\frac{1}{2})\times \mathbb{R}\times B_1^0$ to $Y^0$ because the function in the brackets is in $C^{\beta}(\mathbb T),$ and is obtained as sums and products of functions with real coefficients.
In order to prove its differentiability  we have to compute their partial derivatives,
$$
\partial_{\varepsilon}G_1(\EE, \Omega, f)= \textnormal{Im}\Big\{2\Omega( \overline{w}+2\varepsilon\overline{f(w)})w(1+\varepsilon f'(w))+2\Omega(\varepsilon\overline{w}+\varepsilon^2\overline{f(w)}-d)wf'(w)\Big\},
$$
and clearly this is a continuous function from $(-\frac{1}{2},\frac{1}{2})\times \mathbb{R}\times B_1^0$ to $Y^0.$ 
\\
Taking now the derivative in $\Omega$ we get
$$
\partial_{\Omega}G_{1}(\EE, \Omega, f)= \textnormal{Im}\Big\{2\Big(\EE \overline{w}+\EE^2\overline{f(w)}-d\Big) w\big(1+\EE f^\prime(w)\big)\Big\},
$$
 which is continuous from $(-\frac{1}{2},\frac{1}{2})\times \mathbb{R}\times B_1^0$ to $Y^0.$ 
Let us note that $G_1$ is a polynomial also in $f$ and $f'$ and consequently the derivative is  polynomial in $f$ and $f'$. 
Thus, it is necessary a continuous function  from $(-\frac{1}{2},\frac{1}{2})\times \mathbb{R}\times B_1^0$ to $Y^0.$  It is an easy computation 
to check that 
$$
\partial_{f}G_1(0,\Omega, 0)(h)=-\textnormal{Im}\{h'(w)\}. 
$$
Let's take now 
\begin{eqnarray*}
G_2(\EE,f)&=&\textnormal{Im}\Big\{  \Big(\fint_{\mathbb{T}}\frac{\overline{A}+\EE\overline{B}}{A+\EE B} f^\prime(\tau)d\tau+\fint_{\mathbb{T}}\frac{A\overline{B}-\overline{A} B}{A(A+\EE B)}d\tau\Big)w\big(1+\EE f^\prime(w)\big)  \Big\} \\
&=&\textnormal{Im}\Big\{ \big(G_{21}+G_{22}\big)w\big(1+\EE f^\prime(w)\big) \Big\}.
\end{eqnarray*}
To prove that $G_2(\EE,f)$ is a function from $(-\frac{1}{2},\frac{1}{2})\times \mathbb{R}\times B_1^0$ to $Y^0$ it is enough 
to verify that the functions $G_{21}(\EE,f)$ and $G_{22}(\EE,f)$ satisfy the same property. 
Observe that the function 
$$
G_{21}(\EE,f)=\fint_{\mathbb T}\frac{\overline{\tau}-\overline{w}+\EE(f(\overline{\tau})-f(\overline{w}))}{\tau-w+\EE(f(\tau)-f(w))}f^\prime(\tau)d\tau
$$
is given by an integral operator. Since $f$ is in $C^{1+\beta}(\mathbb T),$ we will have that $G_{21}$ belongs to  the space $C^{\beta}(\mathbb T)$ provided  the kernel
$$ 
K(\tau,w)=\frac{\overline{\tau}-\overline{w}+\EE(f(\overline{\tau})-f(\overline{w}))}{\tau-w+\EE(f(\tau)-f(w))}
$$
satisfies the hypotheses of Lemma  \ref{noyau} for $\alpha=0.$
It is obvious that
$$
\sup_{\tau\neq w}|K(\tau,w)|\le 1, 
$$
and moreover 
\begin{eqnarray*}
|\partial_wK(\tau,w)|&=&\Big|\frac{(1+\EE f^\prime(w)\big((\overline{\tau}-\overline{w})+\EE(f(\overline{\tau})-f(\overline{w})\big)}
{((\tau-w)+\EE(f(\tau)-f(w)))^2}      
+\frac{1}{w^2}\frac{1+\EE f^\prime(\overline{w})}{(\tau-w)+\EE(f(\tau)-f(w))}
\Big|\\
&\le&\frac{M^2+M}{|\tau-w|},
\end{eqnarray*}
where $M=\frac{1+\EE\|f\|_{C^{1+\alpha}(\mathbb T)}}{1-\EE\|f\|_{C^{1+\alpha}(\mathbb T)}}.$ 
Now to check that this function has real coefficients we have to show that
$\overline{G_{21}(\EE,f)(w)}=G_{21}(\EE,f)(\overline w).$
Using the change of variable $\eta=\overline\tau,$ it is an easy computation to see that 
\begin{eqnarray*}
\overline{G_{21}(\EE,f)(w)}&=-&\fint_{\mathbb T}\frac{\tau-w+\EE(f(\tau)-f(w))}{\overline{\tau}-\overline{w}+\EE(f(\overline{\tau})-f(\overline{w}))}f^\prime(\overline{\tau})d\overline{\tau}=
\fint_{\mathbb T}\frac{\overline{\eta-\overline w}+\EE\overline{(f(\eta)-f(\overline w))}}{\eta-\overline w+\EE(f(\eta)-f(\overline w))}f^\prime({\eta})d{\eta}\\
&=&G_{21}(\EE,f)(\overline w).
\end{eqnarray*}
On the other hand the function 
$$
G_{22}(\EE,f)=\fint_{\mathbb T} \frac{(\tau-w)\big(f(\overline{\tau})-f(\overline{w})\big)-(\overline{\tau}-\overline{w}) (f(\tau)-f(w))}
{(\tau-w)\big((\tau-w)+\EE(f(\tau)-f(w)\big)}d\tau
$$
will be in the space $C^{\beta}(\mathbb T)$ if the kernel 
$$
K(\tau,w)=\frac{(\tau-w)\big(f(\overline{\tau})-f(\overline{w})\big)-(\overline{\tau}-\overline{w}) (f(\tau)-f(w))}
{(\tau-w)\big((\tau-w)+\EE(f(\tau)-f(w)\big)}
$$
satisfies the hypotheses of Lemma \ref{noyau} for $\alpha=0$. As before, it is straightforward that
$$
\sup_{\tau\neq w}|K(\tau,w)| \le \frac{2\| f \| _{C^{1+\alpha}}}{1-\EE\| f\| _{{C^{1+\alpha}(\mathbb T)}}}
$$
and  
$$
|\partial_wK(\tau,w)\le \frac{C}{|\tau-w|},
$$
where the constant $C$ depends on $\EE$ and $\|f\|_{C^{1+\beta}(\mathbb T)}.$ 
To check that the function $G_{22}$ has real coefficients one can repeat the same procedure 
used before   for the function $G_{21}.$ 
\\
Now we will verify that the function $G_2$ is of class  $C^1$  from $(-\frac{1}{2},\frac{1}{2})\times \mathbb{R}\times B_1^0$ to $Y^0.$ 
To do so,  we will check the continuity of the partial derivatives of  $G_{21}$ and $G_{22}.$
Simple computations prove that 
\begin{eqnarray*}
\partial_{\EE}G_{21}&=&\fint_{\mathbb T}\frac{f(\overline{\tau})-f(\overline{w})}{\tau-w+\EE(f(\tau)-f(w))}f^\prime(\tau)d\tau\\
&-&\fint_{\mathbb T}
\frac{\overline{\tau}-\overline{w}+\EE\big(f(\overline{\tau})-f(\overline{w})\big)}{(\tau-w+\EE(f(\tau)-f(w))^2}(f(\tau)-f(w)) f^\prime(\tau)d\tau
\end{eqnarray*}
and 
$$
\partial_{\EE}G_{22}=-2i\fint_{\mathbb T}\frac{\textnormal{Im}\Big\{(\tau-w)\big(f(\overline{\tau})-f(\overline{w})\big)\Big\}}{(\tau-w)\big(\tau-w+\EE(f(\tau)
-f(w))\big)^2}(f(\tau)-f(w))d\tau.
$$
The existence and the continuity of this partial derivative can be obtained by proving that the kernels 
that appear in the integral operators 
satisfy the conditions of Lemma  \ref{noyau}.
\mbox{Take $h \in X$} we will compute the G\^ateaux derivative in the direction $h$ of the function $G_2.$
 For it we only need to calculate the G\^ateaux derivatives of the functions $G_{21}$ and $G_{22}.$
\begin{eqnarray*}
 \partial_fG_{21}(\EE,f)h(w)&\equiv&\fint_{\mathbb T}\frac{(h(\overline{\tau})-h(\overline{w}))}
 {\tau-w+\EE(f(\tau)-f(w))}f^\prime(\tau)d\tau
\\
&+&\fint_{\mathbb T}\frac{\overline{\tau}-\overline{w}+\EE(f(\overline{\tau})-f(\overline{w}))}{\tau-w+\EE(f(\tau)-f(w))}h^\prime(\tau)d\tau\\
&-&\EE\fint _{\mathbb T}\frac{\overline{\tau}-\overline{w}+\EE(f(\overline{\tau})-f(\overline{w}))}{\big(\tau-w+\EE(f(\tau)-f(w)))^2} (h(\tau)-h(w)
\big)f^\prime(\tau)d\tau.
 \end{eqnarray*}
 Moreover, one can easily check that
 \begin{eqnarray*}
 \partial_fG_{22}(\EE, f)h(w)&=&2i\fint_{\mathbb T}\frac{\textnormal{Im}\big\{(\tau-w)(h(\overline{\tau})-h(\overline{w}))\big\}}{(\tau-w)(\tau-w+\EE(f(\tau)-f(w))}d\tau\\
 &-&2i\EE\fint_{\mathbb T}\frac{\textnormal{Im}\big\{(\tau-w)(f(\overline{\tau})-f(\overline{w}))\big\}}{(\tau-w)\big((\tau-w+\EE(f(\tau)-f(w))\big)^2}(h(\tau)-h(w))d\tau.
 \end{eqnarray*}
Again  Lemma \ref{noyau} applied to the kernels that appear in the G\^ateaux derivatives of the functions $G_{21}$ and $G_{22}$ 
will give  the existence and the continuity of the functions $\partial_fG_{21}$ and $\partial_fG_{22}.$
On the other hand,
$$
\partial_{f}G_2(0,0)(h)=\textnormal{Im}\Big\{\Big(\partial_fG_{21}(0,0)(h)-\partial_fG_{22}(0,0)(h)\Big)w\Big\}.
$$
In addition, using  the residue theorem we get 
$$
\partial_{f} G_{21}(0,0)(h)=\fint_{\mathbb T}\frac{\overline{\tau}-\overline{w}}{\tau-w}h^\prime(\tau)d\tau=0
$$
and 
$$
\partial_{f} G_{22}(0,0)(h)=2i\fint_{\mathbb T}\frac{\textnormal{Im}\{(\tau-w)(h(\overline{\tau})-h(\overline{w}))\}}{(\tau-w)^2}d\tau=0.
$$
Consequently $\partial_{f}G_2(0,0)(h)=0.$
Let's now study the last  function  in \eqref{def1}
\begin{eqnarray*}
G_{3}(\EE,f)&=&-\textnormal{Im}\Big\{\Big(\fint_{\mathbb T}\frac{\overline\tau+\EE f(\overline{\tau})}{\EE(\tau+w)+\EE^2(f(\tau)+f(w))-2d}(1+\EE f^\prime(\tau))d\tau\Big)w(1+\EE f^\prime(w))\Big\}\\
&=&-\textnormal{Im}\Big\{G_{31}(\EE,f)w(1+\EE f^\prime(w)\Big\}.
\end{eqnarray*}
So, the regularity of the function $G_{3}$ is equivalent to the regularity of the function $G_{31}.$ 
Now this function is given by an integral operator  with  kernel 
$$
K(\tau,w)=\frac{\overline\tau+\EE f(\overline{\tau})}{\EE(\tau+w)+\EE^2(f(\tau)+f(w))-2d}\cdot
$$
It is clear that  $|K(\tau,w)|\le C$ and moreover
$$
|\partial_{w} K(\tau,w)|=\Big|\frac{(\overline\tau+\EE f(\overline{\tau}))\big(\EE+\EE^2f^\prime(w)\big)}{\big(\EE(\tau+w)+
\EE^2(f(\tau)+f(w))-2d\big)^2}\Big|\le C.
$$
Since $1+\EE f^\prime(\tau)$ is in $C^{\beta}(\mathbb T)$ then  applying once again  Lemma \ref{noyau} 
to the above kernel we get that $G_{31}$ is a function in $C^{\beta}(\mathbb T)$. 
To prove that $G_{31}$ has real coefficients one only has to repeat the arguments given 
in the case of the function $G_{21}.$
Now, to check that the function $(\EE,f)\mapsto G_{31}(\EE, f)$ is  $C^1$ we have to compute 
its  partial derivatives
\begin{eqnarray*}
\partial_{\EE}G_{31}&=&\fint_{\mathbb T}\frac{f(\overline\tau)(1+\EE f^\prime(\tau))}{\EE (\tau +w)+\EE ^2(f(\tau)+f(w))-2d}d\tau
+\fint_{\mathbb T}\frac{(\overline\tau+\EE f(\overline\tau))f^\prime(\tau)}{\EE (\tau +w)+\EE ^2(f(\tau)+f(w))-2d}d\tau\\
&-&\fint_{\mathbb T}\frac{(\overline\tau+\EE f(\overline\tau)) (\tau+w+2\EE(f(\tau)+f(w))}{\big(\EE (\tau +w)+\EE ^2(f(\tau)+f(w))-2d\big)^2}(1+\EE f^\prime(\tau))d\tau.
\end{eqnarray*}
Easy computations, using Lemma \ref{noyau},  prove that these operators are continuous  
from $(-\frac{1}{2},\frac{1}{2})\times \mathbb{R}\times B_1^0$ to $C^{\beta}(\mathbb T).$ 
Since they are  functions with real coefficients we can conclude that $\partial_{\EE}G_3$ is continuous 
from $(-\frac{1}{2},\frac{1}{2})\times \mathbb{R}\times B_1^0$ to $Y^0.$
On the other hand, we can compute the G\^ateaux derivative of $G_{31}$ in a given  direction $h\in X$
\begin{eqnarray*}
\partial_fG_{31}(\EE, f)(h)&=&\EE \fint_{\mathbb T}\frac{h(\overline\tau)(1+\EE f^\prime(\tau))}{ \EE (\tau +w)+\EE ^2(f(\tau)+f(w))-d}d\tau\\
&+&\EE \fint_{\mathbb T}\frac{(\overline\tau+\EE f(\overline\tau))h^\prime(\tau)}{\EE (\tau +w)+\EE ^2(f(\tau)+f(w))-d}d\tau\\
&-&\EE^2 \fint_{\mathbb T}\frac{(\overline\tau+\EE f(\overline\tau))(h(\tau)+h(w))}{\big(\EE (\tau +w)+\EE ^2(f(\tau)+f(w))-d\big)^2}(1+\EE f^\prime(\tau))d\tau.
\end{eqnarray*}
Again, by straightforward  computations one can verify   that the integral operators defined by
these partial derivatives are continuous 
 and so we obtain that $\partial _{f}G_3$ is continuous from 
 $(-\frac{1}{2},\frac{1}{2})\times \mathbb{R}\times B_1^0$ to $Y^0.$ 
Moreover we find $
\partial_{f}G_{31}(0,0)(h)=0,
$
and consequently 
$$
{\partial_{f}G_{31}(0,0)(h)=0.}
$$
Since by definition {$-2G^0=G_1+G_2+G_3$} then  we deduce  that  $\partial_fG^0(0,\Omega,0)(h)=\frac12\textnormal{Im}\{h^\prime\},$ which is clearly an isomorphism  from $X^0$ to $ \widehat Y^0$. Finally we note that 
when $\EE=0$ one should get the two point vortices. Indeed, we can easily check that
$$
G^0(0,\Omega,0)=\textnormal{Im}\Big\{\big(\Omega d\, -\frac{1}{4d}\big)w\Big\}
$$
and therefore $G^0(0,\Omega,0)=0$ if and only if
\begin{equation*}
\Omega=\Omega_{sing}^0=\frac{1}{4 d^2}\cdot
\end{equation*}
\\
{\bf{Part $\hbox{II}$: case $\alpha\in (0,1)$}}. Now we shall move to the proof of the statement when $\alpha\in (0,1).$ Recall that the functional defining the rotating pairs is given in \eqref{split1}. 
We shall start with the proof of the regularity for the function $G_1$ which is the easiest one.
 The suitable extension of this function, still denoted $G_1$,  is given by
\begin{equation}\label{G_0X1}
{G_1}(\EE,\Omega,f(w))= \Omega\,\textnormal{Im}\Bigg\{\Big(\EE w+\EE ^2|\EE|^{\alpha}f(w)-d\Big)\, \overline{w}\,
\Big(1+\EE|\EE|^\alpha\overline{f^\prime(w)}\Big)\Bigg\}.
\end{equation}
 $G_1$ is well-defined from $(-\frac12,\frac12)\times\RR\times B_1^\alpha$ to $ Y^\alpha$ because, using the algebra structure of H\"older 
spaces with positive regularities,  the function between the brackets
 belongs to $C^{1-\alpha}(\mathbb{T})$ and has real Fourier coefficients.
To prove that this functional is $C^1$ it suffices to check that the partial derivatives exist and are continuous. It is clear that
\begin{eqnarray*}
\partial_\EE{G_1}(\EE,\Omega,f(w))&=&\Omega\,\textnormal{Im}\Bigg\{\Big(w+(2\EE|\EE|^\alpha+\alpha\, \hbox{sign}(\EE) |\EE|^{1+\alpha}) f(w)\Big)\, \overline{w}\,\Big(1+\EE|\EE|^\alpha\overline{f^\prime(w)}\Big)\Bigg\}\\
&+&\Omega(|\EE|^{\alpha}+\alpha\, \hbox{sign}(\EE)\EE|\EE|^{\alpha-1})\textnormal{Im}\Bigg\{\Big(\EE w+\EE ^2|\EE|^{\alpha}f(w)-d\Big)\, \overline{w}\,\overline{f^\prime(w)}\Bigg\}
\end{eqnarray*}
This function is polynomial in $f$ and $f^\prime$ and therefore it is  continuous from $(-\frac12,\frac12)\times\RR\times B_1^\alpha$ to $ Y^\alpha$.  
On the other hand, the partial derivative with respect to $\Omega$ it  is given by
$$
\partial_\Omega{G_1}(\EE,\Omega,f(w))=\textnormal{Im}\Bigg\{\Big(\EE w+\EE ^2|\EE|^{\alpha}f(w)-d\Big)\, \overline{w}\,
\Big(1+\EE|\EE|^\alpha\overline{f^\prime(w)}\Big)\Bigg\}
$$
and this  is obviously  continuous from  $(-\frac12,\frac12)\times\RR\times B_1^\alpha$ to $ Y^\alpha$.  
Note also that  ${G_1}$ is polynomial with respect to $f$ and $f^\prime$ and consequently $\partial_f {G_1}$ exists and is continuous. 
This concludes the fact that ${G_1}$ is $C^1.$ It is  easy to check that for any direction $h\in X^\alpha$
\begin{equation}\label{F1}
\partial_f {G_1}(0,\Omega,0)(h)=0.
\end{equation}
For the remaining functionals the situation is much more complicated. As we shall see the reasoning is very classical and we will  
give just  some significant  details. We first start with the term $G_3$. To find the suitable extension note that  the ansatz 
of the solution is very crucial and allows to get rid of the singularity in $\EE.$ Recall that
\begin{equation}\label{G_2X1}
G_3(\EE,f(w))=\textnormal{Im}\Big\{I_2(\EE,f(w)) L(\EE,f(w))\Big\}\quad\hbox{with}\quad  L(\EE,f(w))=\overline{w}\,
\Big(1+\EE|\EE|^\alpha\overline{f^\prime(w)}\Big)
\end{equation} 
where we have extended  the tangent vector to $L$ and 
as previously,  
$$L:(-\frac12,\frac12)\times B_1^\alpha\to  Y^\alpha
$$ is well-defined and is of class $C^1$. Therefore it suffices to prove that $I_2$ can be extended from 
$(-\frac12,\frac12)\times \RR\times B_1^\alpha$ to $Y^\alpha$ as a $C^1$ function. The key point is Taylor formula:  
\begin{equation}\label{Ta1}
\frac{1}{|A+B|^\alpha}=\frac{1}{|A|^\alpha}-\alpha\int_0^1\frac{\hbox{Re}(A\overline{B})+t|B|^2}{|A+tB|^{2+\alpha}}dt
\end{equation}
which is true for any complex numbers $A,B$ such that $B|<|A|$. As an application we get
$$
\frac{1}{\vert \EE\phi(w)+\EE \phi(\tau)-2d\vert^\alpha}=\frac{1}{(2d)^\alpha}-\alpha
\int_0^1\frac{-2d\,\EE\hbox{Re}\big[\phi(\overline{\tau})+\phi(\overline{w})\big]+t\EE^2|\phi(\tau)+\phi(w)|^2}
{\vert t\EE\phi(w)+t\EE \phi(\tau)-2d\vert^{2+\alpha}}dt.
$$
We mention that the condition $|B|<|A|$ is satisfied because
\begin{eqnarray*}
\big|\EE\phi(w)+\EE \phi(\tau)\big|&\le& 2\EE\|\phi\|_{L^\infty}\\
&\le&4\EE\\
&<& d.
\end{eqnarray*}
Consequently
\begin{equation*}
{I_2}(\EE,f(w))=-\alpha C_\alpha\fint_{\mathbb{T}}\int_0^1\frac{-2d\,\hbox{Re}
\big[\phi(\overline{\tau})+\phi(\overline{w})\big]+t\EE|\phi(\tau)+\phi(w)|^2}{\vert t\EE\phi(w)+t\EE \phi(\tau)-2d
\vert^{2+\alpha}}\phi^\prime(\tau) d\tau dt,
\end{equation*}
where we have used the fact
$$
\fint_{\mathbb{T}}\phi^\prime(\tau)d\tau=0.
$$
Thus the suitable extension of this functional is 
\begin{eqnarray}\label{I_2X}
{I_2}(\EE,f(w))&=&-\alpha C_\alpha\fint_{\mathbb{T}}\int_0^1\frac{-2d\,\hbox{Re}\big[\phi(\overline{\tau})
+\phi(\overline{w})\big]+t\EE|\phi(\tau)+\phi(w)|^2}{\vert t\EE\phi(w)+t\EE \phi(\tau)-2d\vert^{2+\alpha}}\phi^\prime(\tau) d\tau dt\\
\nonumber &\equiv&-\alpha C_\alpha\fint_{\mathbb{T}}\int_0^1\mathcal{K}_2(\tau,w)\phi^\prime(\tau)d\tau,
\end{eqnarray}
with $\phi(w)=w+\EE|\EE|^\alpha f(w)$. We shall now check that this extension defines a $C^1$ function  
 from   $(-\frac12,\frac12)\times B_1^\alpha$ to $  Y^\alpha$. First, the integral operator is well-defined since the kernel
  $\mathcal{K}_2$ is not singular 
and satisfies the hypotheses of  Lemma \ref{noyau} for any $f\in B_1^\alpha,$
$$
|\mathcal{K}_2(\tau,w)|\le C\quad \hbox{and}\quad |\partial_w\mathcal{K}_2(\tau,w)|\le C
$$
  for some constant $C$ and thus
  $$
  \|I_2(\EE,f)\|_{C^{1-\alpha}(\mathbb{T})}\le C\|\phi^\prime\|_{L^\infty}\le C.
  $$
Taking $(\EE,f)=(0,0)$ in \eqref{I_2X} yields
\begin{eqnarray}\label{I_2XXX}
\nonumber I_2(0,0)&=&\frac{\alpha C_\alpha}{(2d)^{1+\alpha}}\fint_{\mathbb{T}}\int_0^1\overline{\tau} d\tau dt\\
&=&\frac{\alpha C_\alpha}{2(2d)^{1+\alpha}}.
\end{eqnarray}
In addition, for  $f\in B_1^\alpha$ and $h\in X^\alpha$ one has 
\begin{eqnarray*}
\partial_f I_2(0,f)h(w)&=&\frac{d}{ds}I_2(0, f(w)+s\,h(w))_{|s=0}\\
&=&0.
\end{eqnarray*}
As 
$
\partial_f L(0,f)=0
$
then we deduce 
\begin{equation}\label{F_2}
\partial_f G_3(0,f)=0.
\end{equation}
For a future use, we shall apply once again \eqref{Ta1} to $I_2(\EE,f)$ in order to get
\begin{eqnarray}\label{I_2XX}
\nonumber {I_2}(\EE,f(w))&=&\frac{\alpha C_\alpha}{(2d)^{1+\alpha}}\fint_{\mathbb{T}}\hbox{Re}
\big(\phi(\overline{\tau})\big)\phi^\prime(\tau)d\tau -\frac{\alpha C_\alpha}{(2d)^{2+\alpha}}\frac{\EE}{2}\fint_{\mathbb{T}}
{|\phi(\tau)+\phi(w)|^2}\phi^\prime(\tau) d\tau 
\\
&+&\alpha C_\alpha(2+\alpha)\EE\fint_{\mathbb{T}}{K_2(\tau,w)}\phi^\prime(\tau)d\tau,
\end{eqnarray}
$$
 K_2(\tau,w)=\iint_{[0,1]^2}\frac{\Big(-2d\,\hbox{Re } \Phi({\tau},w)+t\EE|\Phi({\tau},w)|^2\Big)\Big(-2d\,
 t\,\hbox{Re } \Phi(\tau,w)+st\EE|\Phi(\tau,w)|^2\Big)}{\vert st\EE\phi(w)
+st\EE \phi(\tau)-2d\vert^{4+\alpha}}dtds,
$$
with
$$
\Phi(\tau,w)=\phi(\tau)+\phi(w).
$$
On the other hand,  since $\phi(w)=w+\EE|\EE|^\alpha f(w)$ 
\begin{eqnarray*}
\nonumber \fint_{\mathbb{T}}\hbox{Re}\big(\phi(\overline{\tau})\big)\phi^\prime(\tau)d\tau
&=&\frac12+\EE|\EE|^\alpha\fint_{\mathbb{T}} \hbox{Re}(\tau)f^\prime(\tau)d\tau+\EE|\EE|^\alpha\fint_{\mathbb{T}} 
\hbox{Re}\big(f(\tau)\big)\phi^\prime(\tau)d\tau.
\end{eqnarray*}
Therefore one gets
\begin{equation}\label{FF-11}
{I_2}(\EE,f(w))=\frac{\alpha C_\alpha}{2(2d)^{1+\alpha}}+\EE \,\mathcal{I}_2(\EE,f(w)).
\end{equation}
Using that the kernel $K_2$ satisfies the conditions of Lemma \ref{noyau}, that is,
$$|K_2(\tau,w)|\le C,\quad |\partial_w K_2(\tau,w)|\le C,
$$
one can verify that the function  ${I}_2:(-\frac12,\frac12)\times B_1^\alpha \to C^{1-\alpha}(\mathbb{T})$ is well-defined.
To prove that it is indeed of class $C^1$ we shall look for its derivatives and study their continuity.  The computations are straightforward and resemble to those done for Euler equations and thus we will skip the details.  
Inserting the formula \eqref{FF-11} into the expression of $G_3$ allows to get the decomposition
\begin{equation}\label{G_2}
G_3(\EE,f)\equiv-\frac{\alpha C_\alpha}{2(2d)^{1+\alpha}} e_1(w)-\EE \mathcal{R}_2(\EE,f)
\end{equation}
with $\mathcal{R}_2:(-\frac12,\frac12)\times B_1^\alpha\to C^{1-\alpha}(\mathbb{T})$ being a  $C^1$ function.
Let us now move to the extension  of the function $G_2$ defined in \eqref{split1X}. We can split an extension of $I_1(\EE,f)$ into  three parts as follows,
\begin{eqnarray}\label{FF1}
\nonumber I_1(\EE,f(w))&=&{C_\alpha}\mathop{{\fint}}_\mathbb{T}\frac{f^\prime(\tau)d\tau}{\vert \phi(w)-\phi(\tau)\vert^\alpha}+\frac{C_\alpha}{\EE |\EE|^{\alpha}}{{\fint}}_\mathbb{T}\frac{d\tau}{\vert \tau-w\vert^\alpha}\\
\nonumber &+&\frac{C_\alpha}{\EE |\EE|^{\alpha}}\mathop{{\fint}}_\mathbb{T}\Big(\frac{1}{\vert \phi(w)-\phi(\tau)\vert^\alpha}-\frac{1}{\vert \tau-w\vert^\alpha}\Big)d\tau\\
\nonumber &=&{C_\alpha}\mathop{{\fint}}_\mathbb{T}\frac{f^\prime(\tau)d\tau}{\vert \phi(w)-\phi(\tau)\vert^\alpha}+\frac{\widehat \mu_\alpha\, w}{\EE |\EE|^{\alpha}}+{C_\alpha}\mathop{{\fint}}_\mathbb{T}K(\EE,\tau,w) d\tau\\
&=&I_{11}(\EE,f(w))+\frac{\widehat \mu_\alpha\, w}{\EE |\EE^{\alpha}|}+I_{12}(\EE,f(w))
\end{eqnarray}
with
$$
K(\EE,\tau,w)=\frac{1}{\EE |\EE|^{\alpha}}\Big(\frac{1}{\vert \phi(w)-\phi(\tau)\vert^\alpha}-\frac{1}{\vert \tau-w\vert^\alpha}\Big)
$$
and
\begin{equation}\label{C_XX}
\widehat \mu_\alpha=\frac{\alpha \Gamma(1-\alpha)}{(2-\alpha)\Gamma^2(1-\frac\alpha2)} C_\alpha=\frac{ \Gamma(1-\alpha)\Gamma(1+\frac\alpha2)}{2^{1-\alpha}\Gamma(2-\frac\alpha2)\Gamma^2(1-\frac\alpha2)}\cdot
\end{equation}
Note that we  have used the identity, see \cite[Lemma 2]{H-H}
$$
\fint_{\mathbb{T}}\frac{d\tau}{|\tau-w|^\alpha}=\frac{\alpha \Gamma(1-\alpha)}{(2-\alpha)\Gamma^2(1-\frac\alpha2)} w.
$$
Consequently,
\begin{eqnarray}\label{F001}
G_2(\EE,f(w))
\nonumber &=&\hbox{Im}\Big\{I_{11}(\EE,f(w))\overline{w}\Big(1+\EE|\EE|^\alpha\overline{f^\prime(w)}\Big)\Big\}\\
&+&\hbox{Im}\Big\{I_{12}(\EE,f(w)) \overline{w}\Big(1+\EE|\EE|^\alpha\overline{f^\prime(w)}\Big)\Big\}-\widehat \mu_\alpha \hbox{Im}(f^\prime(w)).
\end{eqnarray}
The last term defines a linear operator from $X^\alpha$ to $Y^\alpha$ and therefore it is smooth. It remains to study  the first and second terms. This amounts to studying the terms $I_{11}$ and $I_{12}$. The first part is extended as usual through the formula
$$
I_{11}(\EE,f(w))={C_\alpha}\mathop{{\fint}}_\mathbb{T}\frac{f^\prime(\tau)\, d\tau}{\big\vert \phi(\tau)-\phi(w)\big\vert^\alpha},\quad \hbox{with}\quad \phi(w)=w+\EE|\EE|^\alpha f(w).
$$
First to check that $I_{11}$ is well-defined we use Corollary \ref{cor} which implies that 
\begin{eqnarray*}
\|I_{11}(\EE,f)\|_{C^{1-\alpha}(\mathbb{T})}&\le& C\|f^\prime\|_{L^\infty}\\
&\le&C\|f\|_{C^{2-\alpha}(\mathbb{T})}.
\end{eqnarray*}
Now we shall prove that $I_{11}$ is $C^1$ and for this purpose it suffices to check the existence of the partial derivatives and their continuity in strong topology. The partial derivative with respect to $\EE$ can be easily computed and we find
\begin{eqnarray*}
\partial_\EE I_{11}(\EE,f(w))&=&-{\alpha C_\alpha}(  |\EE|^\alpha +\alpha \EE\hbox{sign}(\EE)|\EE|^{\alpha-1})\mathop{{\fint}}_\mathbb{T}\frac{\hbox{Re}\big[(\overline\tau-\overline w)(f(\tau)-f(w))\big]}{\big\vert \phi(\tau)-\phi(w)\big\vert^{2+\alpha}}f^\prime(\tau)\, d\tau\\
&-&\alpha C_{\alpha}\EE |\EE|^ {\alpha}\big( |\EE|^{\alpha}+\alpha \EE \hbox{sign}(\EE )|\EE|^{\alpha-1} \big)\mathop{{\fint}}_\mathbb{T}\frac{|f(\tau)-f(w)|^2}{\big\vert \phi(\tau)-\phi(w)\big\vert^{2+\alpha}}f^\prime(\tau)\, d\tau.
\end{eqnarray*}
Introduce the kernels
$$
K_{1}(\tau,w)=\frac{\hbox{Re}\big[(\overline\tau-\overline w)(f(\tau)-f(w))\big]}{\big\vert \phi(\tau)-\phi(w)\big\vert^{2+\alpha}},\quad  K_2(\tau,w)=\frac{|f(\tau)-f(w)|^2}{\big\vert \phi(\tau)-\phi(w)\big\vert^{2+\alpha}}.
$$
Then for $\tau\neq w$
\begin{eqnarray*}
|K_1(\tau,w)|&\le& \frac{\|f\|_{Lip}}{\|\phi^{-1}\|_{Lip}^{2+\alpha}}|\tau-w|^{-\alpha}\\
&\le& C|\tau-w|^{-\alpha}.
\end{eqnarray*}
and in a similar way
\begin{eqnarray*}
|K_2(\tau,w)|
&\le& C|\tau-w|^{-\alpha}.
\end{eqnarray*}
Moreover 
\begin{eqnarray*}
|\partial_wK_1(\tau,w)|+|\partial_wK_2(\tau,w)|
&\le& C|\tau-w|^{-1-\alpha}.
\end{eqnarray*}
Therefore using Lemma \ref{noyau} we deduce that $\partial_\EE I_{11}(\EE,f)\in C^{1-\alpha}(\mathbb{T})$ and the dependence on 
$(\EE,f)\in (-\frac12,\frac12)\times B_1^\alpha$ is continuous. More details in a similar context can be found in \cite{H-H}. 
The partial derivative with respect to $f\in X^\alpha$ in the direction $h\in X^\alpha$ is given by
\begin{eqnarray*}
\partial_f  I_{11}(\EE,f)h&=&{C_{\alpha}}\mathop{{\fint}}_\mathbb{T}\frac{h^\prime(\tau)d\tau}{\big\vert \phi(\tau)-\phi(w)\big\vert^\alpha}\\
&-&\alpha {C_\alpha}\EE|\EE|^\alpha\mathop{{\fint}}_\mathbb{T}\frac{\hbox{Re}\big[(\phi(\tau)-\phi(w))(h(\overline{\tau})-h(\overline{w}))\big]f^\prime(\tau)}{\big\vert \phi(\tau)-\phi(w)\big\vert^{2+\alpha}}\, d\tau
\end{eqnarray*}
which is continuous from $ (-\frac12,\frac12)\times B_1^\alpha$ to $C^{1-\alpha}(\mathbb{T}).$
In particular we get for any $h\in X^\alpha$
\begin{equation}\label{F4}
\partial_f  I_{11}(0,0)h={C_\alpha}\mathop{{\fint}}_\mathbb{T}\frac{h^\prime(\tau)}{\big\vert \tau-w\big\vert^\alpha}\, d\tau.
\end{equation}
We shall now move to the extension and the regularity of $I_{12}$ defined in \eqref{FF1}. 
It can be extended through its kernel as follows, 
$$
K(\EE,\tau,w)=\frac{1}{\EE|\EE|^\alpha}\Big(\frac{1}{\vert \phi(w)-\phi(\tau)\vert^\alpha}-\frac{1}{\vert \tau-w\vert^\alpha}\Big),\quad \phi(w)=w+\EE|\EE|^\alpha f(w).
$$
Now using \eqref{Ta1} we find that
\begin{eqnarray}\label{diff1}
\nonumber K(\EE,\tau,w)&=&-\alpha\int_{0}^1\frac{\hbox{Re}\Big((\overline\tau-\overline{w})\big(f(\tau)-f(w)\big)\Big)}{\vert \tau-w+t\EE|\EE|^\alpha(f(\tau)-f(w))\vert^{2+\alpha}}dt\\
&-&\alpha\EE|\EE|^\alpha\int_{0}^1\frac{t\vert  f(\tau)-f(w)\vert^{2}}{\vert \tau-w+t\EE|\EE|^\alpha(f(\tau)-f(w))\vert^{2+\alpha}}dt.
\end{eqnarray}
By straightforward computations we can check that there exists an absolute constant $C$ such that for
 $(\EE,f)\in (-\frac12,\frac12)\times \in B_1^\alpha $ and  for $\tau\neq w$
$$
|K(\EE,\tau,w)|\le \frac{C}{|\tau-w|^{\alpha}}\quad\hbox{and}\quad |\partial_w K(\EE,\tau,w)|\le \frac{C}{|\tau-w|^{1+\alpha}}\cdot
$$
Therefore using Lemma \ref{noyau} again we can conclude that $I_{12}(\EE,f)$ is well-defined and belongs to 
 $C^{1-\alpha}(\mathbb{T})$. The regularity with respect to $\EE$ is straightforward since $\EE\mapsto K(\EE,\tau,w)$ is $C^1$ and 
$$
|\partial_\EE K(\EE,\tau,w)|\le \frac{C}{|\tau-w|^{\alpha}}\quad\hbox{and}\quad |\partial_w \partial_\EE K(\EE,\tau,w)|\le \frac{C}{|\tau-w|^{1+\alpha}}
$$
which implies that $\partial_\EE I_{12}(\EE,f)$ is well-defined for $(\EE,f)\in (-\frac12,\frac12)\times \in B_1^\alpha $ and it  belongs to the space $C^{1-\alpha}(\mathbb{T})$. The continuity can be done in a similar way. 
Moreover by (\ref{diff1}) we have that 
\begin{eqnarray}\label{I12}
I_{12}(0,0)=0.
\end{eqnarray}
The existence of partial derivative with respect to $f$  can be done without difficulty and we can check that this derivative is continuous. Thus we establish that $I_{12}$ is $C^1$ and in particular we deduce that
\begin{eqnarray}\label{F6}
\partial_f I_{12}(0,0)h(w)=-\alpha C_\alpha \fint_{\mathbb{T}}\frac{\hbox{Re}\Big((\overline\tau-\overline{w})\big(h(\tau)-h(w)\big)\Big)}{\vert \tau-w\vert^{2+\alpha}}d\tau.
\end{eqnarray}
Putting together \eqref{F001}, \eqref{F4} and \eqref{F6} we find  for $h\in X^\alpha$,
\begin{eqnarray}\label{F9}
\partial_f G_2(0,0)h(w) \nonumber &=&{C_\alpha}\hbox{Im}\bigg\{\overline{w}\,\mathop{{\fint}}_\mathbb{T}\frac{h'(\tau)d\tau}{\vert\tau-w\vert^\alpha}\bigg\}-\widehat \mu_\alpha \hbox{Im}\{h^\prime(w)\}\\
&-&\alpha C_\alpha\hbox{Im}\bigg\{\overline{w}\,\mathop{{\fint}}_\mathbb{T}\frac{\hbox{Re}\Big((\overline\tau-\overline{w})\big(h(\tau)-h(w)\big)\Big)}{\vert \tau-w\vert^{2+\alpha}}d\tau\bigg\}.
\end{eqnarray}
According to \eqref{F1}, \eqref{F_2} and \eqref{F9}  we get for any $\Omega\in \RR$ and $h\in X^\alpha$,
\begin{eqnarray*}
\partial_f G^\alpha (0,\Omega,0)h(w)&=&\partial_f G_1(0,\Omega,0)h(w)-\partial_f G_2(0,0)h(w)+\partial_f G_3(0,0)h(w)\\
&=&-{C_\alpha}\hbox{Im}\bigg\{\overline{w}\,\mathop{{\fint}}_\mathbb{T}\frac{h'(\tau)d\tau}{\vert\tau-w\vert^\alpha}\bigg\}+\widehat \mu_\alpha \hbox{Im}\{h^\prime(w)\}\\
\nonumber &+&\alpha C_\alpha\hbox{Im}\bigg\{\overline{w}\,\mathop{{\fint}}_\mathbb{T}\frac{\hbox{Re}\Big((\overline\tau-\overline{w})\big(h(\tau)-h(w)\big)\Big)}{\vert \tau-w\vert^{2+\alpha}}d\tau\bigg\}\\
&\equiv&\mathcal{L}_1h+\mathcal{L}_2 h+\mathcal{L}_3 h.
\end{eqnarray*}
\vspace{0,3cm}
Finally note  that the extension for $G$ is obtained by putting together  
 \eqref{split1}, \eqref{G_0X1},\eqref{G_2X1}, \eqref{I_2X}, \eqref{FF1},\eqref{F001} and \eqref{diff1}. Note that 
to obtain the point vortex pairs we write
 $$
 G^\alpha(0,\Omega,0)=\Big(\Omega d-\frac{\alpha C_\alpha}{2(2d)^{1+\alpha}}\Big) e_1
 $$
which implies that this is  a solution if and only if
 $$
 \Omega=\Omega_{sing}^\alpha\equiv \frac{\widehat{C}_\alpha}{(2d)^{2+\alpha}}\cdot
 $$
This means   that two  point vortices $\pi \delta_0$ and $\pi \delta_{2d}$  rotate uniformly about their center $(d,0)$ with the angular velocity $\Omega_{sing}^\alpha.$
\\
It remains to compute explicitly $\partial_f G^\alpha(0,\Omega,0)$ and show that it is an 
isomorphism from $X^\alpha$ to $Y^\alpha$. To this aim we will start  
computing $\partial_f G_2(0,0)$ whose expression is given \mbox{in \eqref{F9}. }
It is easy to see that
\begin{eqnarray*}
\alpha C_\alpha\overline{w}\mathop{{\fint}}_\mathbb{T}\frac{\hbox{Re}\Big((\overline\tau-\overline{w})\big(h(\tau)-h(w)\big)\Big)}{\vert \tau-w\vert^{2+\alpha}}d\tau&=&\frac12\alpha C_\alpha\overline{w}\mathop{{\fint}}_\mathbb{T}\frac{({\tau}-{w})(h(\overline \tau)-h(\overline w))}{\vert\tau-w\vert^{2+\alpha}} d\tau\\&+&\frac12 \alpha C_\alpha\overline{w}\mathop{{\fint}}_\mathbb{T}\frac{(\overline{\tau}-\overline{w})(h(\tau)-h(w))}{\vert\tau-w\vert^{2+\alpha}} d\tau\\
&\equiv&I_4(h(w))+I_5(h(w)).
\end{eqnarray*}
According \cite[p.360]{H-H} these terms were computed and take the form
\begin{eqnarray}\label{i4}
\hbox{I}_4(h(w))&=& \frac{\alpha(1+\frac{\alpha}{2})}{2(2-\alpha)} \frac{C_\alpha\Gamma(1-\alpha)}{\Gamma^2(1-\alpha/2)}\sum_{n\geq 1}a_n\bigg(1-\frac{\big(2+\frac{\alpha}{2}\big)_n}{\big(2-\frac{\alpha}{2}\big)_n}\bigg)w^{n+1}
\end{eqnarray}
and
\begin{eqnarray}\label{i5}
\hbox{I}_5 (h(w))&=&- \frac{\alpha C_\alpha \Gamma(1-\alpha)}{4\Gamma^2(1-\alpha/2)}\sum_{n\geq 1}a_n\bigg(1-\frac{\big(\frac{\alpha}{2}\big)_n}{\big(-\frac{\alpha}{2}\big)_n}\bigg)\overline{w}^{n+1}.
\end{eqnarray}
It follows that
\begin{eqnarray}\label{i6}
\mathcal{L}_3 h(w)=\textnormal{Im}\{I_4(h(w))+I_5(h(w))\}&=& \frac{\alpha C_\alpha \Gamma(1-\alpha)}{4\Gamma^2(1-\alpha/2)}\sum_{n\geq 1}a_n\beta_n e_{n+1}.
\end{eqnarray}
with
\begin{eqnarray}\label{i7}
\nonumber\beta_n&=& \bigg(1-\frac{\big(\frac{\alpha}{2}\big)_n}{\big(-\frac{\alpha}{2}\big)_n}\bigg)+\frac{1+\frac\alpha2}{1-\frac\alpha2}\bigg(1-\frac{\big(2+\frac{\alpha}{2}\big)_n}{\big(2-\frac{\alpha}{2}\big)_n}\bigg)\\
\nonumber &=& \bigg(1-\frac{\big(\frac{\alpha}{2}\big)_n}{\big(-\frac{\alpha}{2}\big)_n}\bigg)+\frac{1+\frac\alpha2}{1-\frac\alpha2}-\frac{\big(1+\frac{\alpha}{2}\big)_{n+1}}{\big(1-\frac{\alpha}{2}\big)_{n+1}}\\
&=&\frac{2}{1-\frac\alpha2}+\frac{\big(1+\frac{\alpha}{2}\big)_{n-1}}{\big(1-\frac{\alpha}{2}\big)_{n-1}}-\frac{\big(1+\frac{\alpha}{2}\big)_{n+1}}{\big(1-\frac{\alpha}{2}\big)_{n+1}}\cdot
\end{eqnarray}
Regarding  the first  term $\mathcal{L}_1(h(w))$ it may be rewritten in the form
$$
\mathcal{L}_1(h(w))=\hbox{Im}\big\{\hbox{I}_3(h(w))\big\}
$$
with
\begin{eqnarray*}
\hbox{I}_3(h(w)) &\equiv&-{C_\alpha}\overline{w}\fint_\mathbb{T}\frac{h'(\tau)}{\vert w-\tau\vert^\alpha}d\tau\notag\\ &=&{C_\alpha}\sum_{n\geq 1}na_n\overline{w}\fint_\mathbb{T}\frac{\overline{\tau}^{n+1}}{\vert w-\tau\vert^\alpha}d\tau.\notag\\ \end{eqnarray*}
Once again we get in view of \cite[p.360]{H-H} that
\begin{equation}\label{i21}
\hbox{I}_3(h(w))=\frac{C_\alpha\Gamma(1-\alpha)}{\Gamma^2(1-\alpha/2)}\sum_{n\geq 1}n a_n\frac{\big(\frac{\alpha}{2}\big)_n}{\big(1-\frac{\alpha}{2}\big)_n}\overline{w}^{n+1}.
\end{equation}
Therefore
\begin{equation}\label{i2}
\mathcal{L}_1(h(w))=-\frac{C_\alpha\Gamma(1-\alpha)}{\Gamma^2(1-\alpha/2)}\sum_{n\geq 1}n a_n\frac{\big(\frac{\alpha}{2}\big)_n}{\big(1-\frac{\alpha}{2}\big)_n}e_{n+1}.
\end{equation}
For $\mathcal{L}_2$ we readily get by \eqref{C_XX},
\begin{eqnarray*}
\mathcal{L}_2h(w)&=&\widehat \mu_\alpha\sum_{n\geq1} n a_n e_{n+1}\\
&=&\frac{\alpha C_\alpha\Gamma(1-\alpha)}{(2-\alpha)\Gamma^2(1-\frac\alpha2)} \sum_{n\geq1} n a_n e_{n+1}.
 \end{eqnarray*}
Putting together the preceding identities yields to
 \begin{eqnarray}\label{D-exp}
\nonumber\partial_f G^\alpha(0,\Omega,0)h(w)&=&\mathcal{L}_1h(w)+\mathcal{L}_2h(w)+\mathcal{L}_3 h(w)\\
&=&\frac{\alpha C_\alpha\Gamma(1-\alpha)}{4\Gamma^2(1-\frac\alpha2)}\sum_{n\geq1} a_n \gamma_n e_{n+1},
 \end{eqnarray}
 with
 \begin{eqnarray*}
\gamma_n&=&\beta_n-\frac{4}{\alpha}\frac{\big(\frac{\alpha}{2}\big)_n}{\big(1-\frac{\alpha}{2}\big)_n} n+\frac{4}{2-\alpha} n\\
&=&\beta_n-2n\frac{\big(1+\frac{\alpha}{2}\big)_{n-1}}{\big(1-\frac{\alpha}{2}\big)_n} +\frac{2n}{1-\frac\alpha2}\\
&=&\frac{2(1+n)}{1-\frac\alpha2}+\frac{\big(1+\frac{\alpha}{2}\big)_{n-1}}{\big(1-\frac{\alpha}{2}\big)_{n-1}}-
\frac{\big(1+\frac{\alpha}{2}\big)_{n+1}}{\big(1-\frac{\alpha}{2}\big)_{n+1}}-\frac{2n}{n-\frac\alpha2}
\frac{\big(1+\frac{\alpha}{2}\big)_{n-1}}{\big(1-\frac{\alpha}{2}\big)_{n-1}}\\
&=& \frac{2(1+n)}{1-\frac\alpha2}-\frac{\big(1+\frac{\alpha}{2}\big)_{n}}{\big(1-\frac{\alpha}{2}\big)_{n}}-\frac{\big(1+\frac{\alpha}{2}\big)_{n+1}}{\big(1-\frac{\alpha}{2}\big)_{n+1}}\cdot
 \end{eqnarray*}
\\
Now we shall prove that $\partial_f G^\alpha(0,\Omega,0):X^\alpha\to \widehat{Y}^\alpha$ is an isomorphism.  The case $\alpha=0$ is elementary since
$$
\partial_f G^0(0,\Omega,0)h(w)=\frac12\hbox{Im}(h^\prime(w))
$$
and one can easily check that this operator is an isomorphism from $X^0$ to $\widehat{Y}^0.$ So it remains to check the case $\alpha\in (0,1)$. Thus to verify that $\partial_f G^\alpha(0,\Omega,0)$  is   one-to-one   it is enough to prove  the following:  There exist two constants $C_1>0$ and $C_2>0$ such that for any $n\geq1$
\begin{equation}\label{Ineq1}
C_1 n\le \gamma_n\le C_2 n.
\end{equation}
It easy to check that 
\begin{eqnarray*}
\frac{(1+\frac{\alpha}{2})_n}{(1-\frac{\alpha}{2})_n}<\frac{n+\frac{\alpha}{2}}{1-\frac{\alpha}{2}}\beta,
\end{eqnarray*}
where $\beta=\frac{1+\frac{\alpha}{2}}{2-\frac{\alpha}{2}}<1.$
Therefore we deduce by simple computations that for $\alpha\in[0,1]$ 
\begin{eqnarray*}
\gamma_n>\frac{2(1+n)}{1-\frac{\alpha}{2}}-\beta \frac{n+\frac{\alpha}{2}}{1-\frac{\alpha}{2}}-\beta \frac{n+1+\frac{\alpha}{2}}{1-\frac{\alpha}{2}}\ge C_1(\alpha)n.
\end{eqnarray*}
On the other hand, we readily get 
$$
\gamma_n\le C_2(\alpha)n,
$$
and hence  the proof of \eqref{Ineq1} is achieved.  It remains to prove that $\partial_f G^\alpha(0,\Omega,0)$ is onto. Let $g\in \widehat{Y}^\alpha$, we shall prove that the equation $\partial_f G^\alpha(0,\Omega,0)h=g$ admits a solution $h\in X^\alpha.$ The functions $g$ and $h$ take the form
$$
g(w)=\frac{\widehat C_\alpha\Gamma(1-\alpha)}{4\Gamma^2(1-\frac\alpha2)}\sum_{n\geq 1} b_n e_{n+1}(w)\quad\hbox{and}\quad h(w)=\sum_{n\geq 1} a_n \overline{w}^n.
$$
 Therefore using \eqref{D-exp} and \eqref{Ineq1} the equation $\partial_f G^\alpha(0,\Omega,0)h=g$ is equivalent to
 $$
 a_n=\frac{b_n}{\gamma_n}, \, n\geq1.
 $$
 The only point to check is  $h\in C^{2-\alpha}(\mathbb{T})$, that is
 $$
 w\in\mathbb{T}\mapsto \sum_{n\geq1}\frac{b_n}{\gamma_n}\overline{w}^n\in C^{2-\alpha}(\mathbb{T}).
 $$
 From \cite[p.358]{H-H}, there exists a constant $C>0$ such that
 $$
\frac{ \big(1+\frac{\alpha}{2}\big)_{n}}{\big(1-\frac{\alpha}{2}\big)_{n}}= C n^\alpha+O\big(\frac{1}{n^{1-\alpha}}\big)
 $$
which implies in turn that
 $$
 \gamma_n= n\Big(\frac{2}{1-\frac\alpha2} -2C \frac{1}{n^{1-\alpha}}+O\big(\frac{1}{n^{2-\alpha}}\big)\Big).
 $$
 It is not difficult to show that $h\in L^\infty(\mathbb{T})$ and thus it remains to check that $h^\prime\in C^{1-\alpha}(\mathbb{T}).$ Note that
 $$
 -wh^\prime(w)=\sum_{n\geq1}\frac{n b_n}{\gamma_n} \overline{w}^{n}
 $$
 and
 \begin{eqnarray*}
 \frac{n b_n}{\gamma_n}&=&\frac{b_n}{\frac{2}{1-\frac\alpha2} -2C \frac{1}{n^{1-\alpha}}+O\big(\frac{1}{n^{2-\alpha}}\big)}\\
 &=&\frac{(1-\frac{\alpha}{2})b_n}{2}+ \frac{C(1-\frac{\alpha}{2})}{n^{1-\alpha}\big(\frac{2}{1-\frac\alpha2} -2C \frac{1}{n^{1-\alpha}}\big)}b_n+O\big(\frac{1}{n^{2-\alpha}}\big) b_n\\
 &\equiv &\frac{(1-\frac{\alpha}{2})b_n}{2}+\alpha_n b_n+O\big(\frac{1}{n^{2-\alpha}}\big) b_n.
 \end{eqnarray*}
 Using the continuity of  Szeg\"{o} projector $\Pi$ in $C^{1-\alpha}(\mathbb{T})$ we obtain easily that
 $$
 \tilde{h}: w\mapsto \sum_{n\geq1}b_n \overline{w}^n\in C^{1-\alpha}(\mathbb{T}).
 $$
 Define the kernels
 $$
 K_1(w)=\sum_{n\geq1} \alpha_n\overline{w}^n \quad\hbox{and}\quad K_2(w)=\sum_{n\geq1}O\big(\frac{1}{n^{2-\alpha}}\big)\overline{w}^n.
 $$
 The remainder term of $-wh^\prime$ is given by
 $$
 K_1\star\tilde{h}+K_2\star\tilde{h}.
 $$
As the kernel $K_2\in L^{\infty}(\mathbb{T})\subset L^1(\mathbb{T})$ then  $K_2\star \tilde{h}\in C^{1-\alpha}(\mathbb{T}).$ In \cite[p.363-366]{H-H} we established that $K_1\in L^1(\mathbb{T})$ and therefore we obtain $K_1\star \tilde{h}\in C^{1-\alpha}(\mathbb{T})$ and this gives finally $h^\prime\in C^{1-\alpha}(\mathbb{T})$ which concludes the proof.
 
\end{proof}

\subsection{Relationship between the angular velocity and the boundary shape}\label{real-velo}
 As we have seen in Proposition \ref{prop1} the linear
 operator $\partial_fG^\alpha(0,\Omega,0)$ is an isomorphism from $X^\alpha$ to $\widehat Y^\alpha$ 
 and not to the space $Y^\alpha.$ 
However the functional $G^\alpha$ has its range in $Y^\alpha$ which contains strictly 
$\widehat Y^\alpha$. The strategy will be to choose carefully $\Omega$ 
in such way that the range of $G^\alpha$ is contained in $\widehat Y^\alpha.$ This condition is
strong 
enough to uniquely determine $\Omega$ and by this way $\Omega$ plays the role of a Lagrangian multiplier with respect to the range constraint. The main result reads as follows.
\begin{proposition}\label{propX1}
 Let $\alpha\in [0,1).$ There exists a $C^1$ function $\mathcal{R}^\alpha:(-\frac12,\frac12)\times B_1^\alpha\to \RR$  such that, for 
 \begin{eqnarray*}
  \Omega=\Omega^\alpha(\EE,f)
=\Omega_{sing}^\alpha+\mathcal{R}^\alpha(\EE,f),
\end{eqnarray*}
 the modified function $\widehat{G}^\alpha:(-\frac12,\frac12)\times B_1^\alpha\to \widehat{Y}^\alpha$ given  by
 $$
\widehat{G}^\alpha(\EE,f)=G^\alpha\big(\EE,\Omega^\alpha(\EE,f),f\big)
 $$
is well-defined and is of class $C^1$. Moreover,
$$
\forall f\in B_1^\alpha,\,\mathcal{R^\alpha}(0,f)=0\quad\hbox{and}\quad \widehat{G}^\alpha(0,0)=0.
$$
Notice that $\Omega_{sing}^\alpha$ was defined in \eqref{Osing}.
 
 \end{proposition}
\begin{proof}
The proof will be separated in different parts: the case $\alpha=0$ and the case 
$\alpha\in(0,1)$.
\\
{\bf{Part $\hbox{I}$: case $\alpha=0$.}}  A sufficient and necessary condition to guarantee  that  $G^\alpha$ admits a range contained in  the space $\widehat{Y}^\alpha$ is  that its  first Fourier 
coefficients  vanishes. 
 This condition amounts to
 $$
 \fint_{\mathbb{T}} G^\alpha(\EE,\Omega, f(w)) dw=0
 $$
 or equivalently
\begin{equation}\label{const5}
\fint_{\mathbb{T}}F^\alpha\big(\Omega,\EE,f(w)\big)\big(\overline{w}^2-1\big) dw=0.
\end{equation}
We recall that $F^\alpha$ was defined in \eqref{def1} and \eqref{model}. For $\alpha=0$ one may use 
the residue theorem to get
$$
\fint_{\mathbb{T}}F_1\big(\Omega,\EE,f(w)\big)\overline{w}^2 dw=2\Omega\bigg(-d+\EE^3\fint_{\mathbb{T}}f(\overline{w})\overline{w}
f^\prime(w)dw\bigg)
$$
and
$$
\fint_{\mathbb{T}}F_1\big(\Omega,\EE,f(w)\big) dw=2\Omega\bigg(-d\EE\fint_{\mathbb{T}}w f^\prime(w) dw+\EE^3\fint_{\mathbb{T}}
f(\overline{w}){w}f^\prime(w)dw\bigg).
$$
This last identity can be written in the form
$$
\fint_{\mathbb{T}}F_1\big(\Omega,\EE,f(w)\big) dw
=2\Omega\bigg(d\EE\fint_{\mathbb{T}}f(w) dw+\EE^3\fint_{\mathbb{T}}f(\overline{w}){w}f^\prime(w)dw\bigg).
$$
Consequently
\begin{eqnarray*}
\fint_{\mathbb{T}}F_1\big(\Omega,\EE,f(w)\big)\big(\overline{w}^2-1\big) dw=2\Omega \bigg(-d\Big(1+\EE\fint_{\mathbb{T}}f(w) dw\Big)+\EE^3\fint_{\mathbb{T}}f(\overline{w})f^\prime(w)\big(\overline{w}-w\big)dw\bigg).
\end{eqnarray*}
Now we shall evaluate the contribution of $F_3.$ First, observe that
$$
F_3(\EE,f(w))=-\widehat{F_3}(w) w\big(1+\EE f^\prime(w)\big),$$
with the notation
$$ \widehat{F_3}(\EE,f(w))\equiv\fint_{\mathbb{T}}\frac{\overline{\tau}+\EE{f(\overline\tau)}}{\EE(\tau+w)+\EE^2 \big(f(\tau)+f(w)\big)-2d}\big(1+\EE f^\prime(\tau)\big)d\tau.
$$
We write
\begin{eqnarray*}
\frac{\overline{\tau}+\EE{f(\overline\tau)}}{\EE(\tau+w)+\EE^2 \big(f(\tau)+f(w)\big)-2d}&=&-\frac{\overline{\tau}}{2d}+\EE\frac{f(\overline{\tau})}{\EE(\tau+w)+\EE^2 \big(f(\tau)+f(w)\big)-2d}\\
&+&\frac{\EE}{2d}\frac{\tau+w+\EE(f(\tau)+f(w))}{\EE(\tau+w)+\EE^2 \big(f(\tau)+f(w)\big)-2d}\overline{\tau}\\
&\equiv&-\frac{\overline{\tau}}{2d}+\EE g_3(\EE,\tau,w).
 \end{eqnarray*}
 Thus
 $$
  \widehat{F_3}(\EE,f(w))=-\frac{1}{2d}+\EE\fint_{\mathbb{T}}g_3(\EE,\tau,w)\big(1+\EE f^\prime(\tau)\big) d\tau.
 $$
 Hence
\begin{eqnarray*}
 \fint_{\mathbb{T}}F_3\big(\Omega,\EE,f(w)\big)\overline{w}^2 dw&=&\frac{1}{2d}-\EE\fint_{\mathbb{T}}\fint_{\mathbb{T}}g_3(\EE,\tau,w)\big(1+\EE f^\prime(\tau)\big)\overline{w}\big(1+\EE f^\prime(w)\big) d\tau dw\\
 \fint_{\mathbb{T}}F_3\big(\Omega,\EE,f(w)\big) dw&=&-\frac{\EE}{2d}\fint_{\mathbb{T}} f(\tau) d\tau-\EE\fint_{\mathbb{T}}\fint_{\mathbb{T}}g_3(\EE,\tau,w)\big(1+\EE f^\prime(\tau)\big){w}\big(1+\EE f^\prime(w)\big) d\tau dw.
\end{eqnarray*}
Consequently
\begin{eqnarray*}
 \fint_{\mathbb{T}}F_3\big(\Omega,\EE,f(w)\big)\big(\overline{w}^2-1\big) dw&=&\frac{1}{2d}+\frac{\EE}{2d}\fint_{\mathbb{T}} f(\tau) d\tau\\
 &-&\EE\fint_{\mathbb{T}}\fint_{\mathbb{T}}g_3(\EE,\tau,w)\big(1+\EE f^\prime(\tau)\big)(\overline{w}-w)\big(1+\EE f^\prime(w)\big) d\tau dw.
 \end{eqnarray*}
 On the other hand using residue  theorem we get
 \begin{eqnarray*}
 F_2(\EE,f(w))&=&\EE \fint_{\mathbb{T}}\frac{A\overline{B}-\overline{A} B}{A(A+\EE B)} f^\prime(\tau)d\tau\, w\big(1+\EE f^\prime(w)\big)\\
 &+&\EE\fint_{\mathbb{T}}\frac{\big(\overline{A}B-A\overline{B}\big) B }{A^2(A+\EE B)}d\tau\, w\big(1+\EE f^\prime(w)\big)\\
 &\equiv&\EE g_2(\EE,w)w\big(1+\EE f^\prime(w)\big).
  \end{eqnarray*}
 Therefore
 \begin{equation*}
 \fint_{\mathbb{T}}F_2\big(\Omega,\EE,f(w)\big)\big(\overline{w}^2-1\big) dw=\EE\fint_{\mathbb{T}}g_2(\EE,w)(\overline{w}-w)\big(1+\EE f^\prime(w)\big)  dw.
 \end{equation*}
The equation \eqref{const5} becomes
\begin{eqnarray*}
 2\Omega \bigg(d\Big[1+\EE\fint_{\mathbb{T}}f(w) dw\Big]&-&\EE^3\fint_{\mathbb{T}}f(\overline{w})f^\prime(w)\big(\overline{w}-w\big)dw\bigg)=\frac{1}{2d}+\frac{\EE}{2d}\fint_{\mathbb{T}} f(\tau) d\tau\\&+&\EE\fint_{\mathbb{T}}g_2(\EE,w)(\overline{w}-w)\big(1+\EE f^\prime(w)\big)  dw\\
 &+&\EE\fint_{\mathbb{T}}\fint_{\mathbb{T}}g_3(\EE,\tau,w)\big(1+\EE f^\prime(\tau)\big)(w-\overline{w})\big(1+\EE f^\prime(w)\big) d\tau dw\\
 &\equiv&\frac{1}{2d}+\frac{\EE}{2d} T_1(\EE,f)
  \end{eqnarray*}
  which can be written in the form
 \begin{eqnarray}\label{const4}
 \nonumber \Omega&=&\Omega^0(\EE,f)\\
 \nonumber&=&\frac{1}{4d^2}\frac{1+\EE T_1(\EE,f)}{1-\EE T_2(\EE,f)}\\
  \nonumber&=&\Omega_{sing}^0+\frac{\EE}{4d^2}\frac{T_1(\EE,f)+T_2(\EE,f)}{1-\EE T_2(\EE,f)}\\
  &\equiv&\Omega_{sing}^0+\mathcal{R}^0(\EE,f),
  \end{eqnarray}
  where 
  $$
  T_2(\EE,f)=-\fint_{\mathbb{T}}f(w) dw+\frac{\EE^2}{d}\fint_{\mathbb{T}}f(\overline{w})f^\prime(w)\big(\overline{w}-w\big)dw.
  $$
  Now we intend to prove that $(\EE,f)\mapsto \Omega^0(\EE,f)$ is $C^1$. For this aim 
it is enough to check that the functions $(\EE,f)\mapsto T_1(\EE,f)$ and $(\EE,f)\mapsto T_2(\EE,f)$ are $C^1$ functions and that $|T_2(\EE,f)|<2.$ 
Since $f$ has real coefficients it is clear that  $T_2(\EE,f)\in \mathbb R$ and 
$$
|T_2(\EE,f)|\le \| f \|_{C^{1+\beta}(\mathbb T)}+\frac{\EE^2}{d}2\| f \|^2_{C^{1+\beta}(\mathbb T)}<2.
$$
On the other hand, $T_2$ is polynomial in the variables $\EE, \, f$ and $f^\prime$ and so it should be 
 a $C^1$ function from $(\frac12,\frac12)\times B_1^0$ to $\mathbb R.$
Let's take now the functional 
\begin{eqnarray*}
T_1(\EE,f)&=&\fint_{\mathbb T}f(\tau)d\tau+2d\fint_{\mathbb T}g_2(\EE,w)(\overline w-w)(1+\EE f^\prime(w))dw\\
&+&2d\fint_{\mathbb T}\fint_{\mathbb T}g_3(\EE,\tau,w)(1+\EE f^\prime(\tau))(w-\overline w)(1+\EE f^\prime(\tau))d\tau dw, 
\end{eqnarray*}
where 
$$
g_2(\EE,f)=\fint_{\mathbb T}\frac{A\overline B-\overline A B}{A(A+\EE B)}f^\prime(\tau)d\tau+\fint_{\mathbb T}
\frac{(\overline AB-A \overline B)B}{A^2(A+\EE B)}d\tau,
$$
with  $A=\tau-w, B=f(\tau)-f(w)$ and
$$
g_3(\EE,f)=\frac{f(\overline \tau)}{\EE(\tau+w)+\EE^2(f(\tau)+f(w))-2d}+2d \frac{\tau+w+\EE (f(\tau)+f(w))}
{\EE(\tau+w)+\EE^2(f(\tau)+f(w))-2d}\overline \tau.
$$
Since $|\EE |<\frac12$ and $\|f\|_{C^{1+\beta}}<1$ we get that $g_3$ is a bounded function. Moreover
\begin{eqnarray*}
|g_2(\EE,f)(w)|&\le& 2 \int_{\mathbb T}\Big|\frac{\textnormal{Im}\{(\tau-w)(f(\overline \tau)-
f(\overline w))\}}{(\tau -w)(\tau -w+\EE(f(\tau)-f(w)))} f^\prime(\tau)\Big||d\tau|\\
&+&2\int_{\mathbb T}\Big|\frac{\textnormal{Im}\{(\overline \tau -\overline w)(f(\tau)-f(w))\}(f(\tau)-f(w))}{(\tau -w)^2(\tau -w+
\EE(f(\tau)-f(w)))}\Big||d\tau|\le C,
\end{eqnarray*}
where in the last inequality we use again that $|\EE |<\frac12$ and $\|f\|_{C^{1+\beta}}<1.$
To prove that $T_1$ is a $C^1$ function it suffices to check that the partial derivatives
 of $g_2(\EE, f)$ and $g_3(\EE, f)$ are continuous functions on $(-\frac 12, \frac 12)\times B_1^0.$ 
 Observe that,

\begin{eqnarray*} 
\partial_{\EE}g_2(\EE, f)=&-&2i\fint_{\mathbb T}\frac{\textnormal{Im}\{( \tau- w)(f(\overline \tau)-
f(\overline w))\}}{(\tau-w)(\tau-w+\EE(f(\tau)-f(w)))^2}(f(\tau)-f(w))f^\prime(\tau)d\tau\\
&-&2i\fint_{\mathbb T}\frac{\textnormal{Im}\{(\overline \tau-\overline w)(f(\tau)-f(w))\}}
{(\tau-w)^2(\tau-w+\EE(f(\tau)-f(w)))^2}(f(\tau)-f(w))^2d\tau.
\end{eqnarray*}
It is easy to verify that the kernels involved in the above integral operators satisfy the conditions
 of Lemma \ref{noyau} and so we can conclude that $\partial_{\EE}g_2(\EE, f)$ is a continuous function 
from $(-\frac 12, \frac12)\times B_1^0$ to $\mathbb R.$
For any direction  $h \in X^0$ straightforward computations yield 
\begin{eqnarray*}
\partial_{f}g_2(\EE, f)(h)&=&2i\fint_{\mathbb T}\frac{\textnormal{Im}\{( \tau- w)(h(\overline \tau)
-h(\overline w))\}}{(\tau-w)(\tau-w+\EE(f(\tau)-f(w)))} f^\prime(\tau)d\tau\\
&+&2i\fint_{\mathbb T}\frac{\textnormal{Im}\{( \tau- w)(f(\overline \tau)-f(\overline w))\}}{(\tau-w)(\tau-w+\EE(f(\tau)-f(w)))}
h^\prime(\tau)d\tau\\
&-&2i\EE \fint_{\mathbb T}\frac{\textnormal{Im}\{( \tau- w)(f(\overline \tau)-f(\overline w))\}}{(\tau-w)(\tau-w+\EE(f(\tau)-f(w)))^2}
(h(\tau)-h( w))f^\prime(\tau)d\tau\\
&+&2i\fint_{\mathbb T}\frac{\textnormal{Im}\{(\overline \tau-\overline w)(h(\tau)-h(w))\}}
{(\tau-w)^2(\tau-w+\EE(f(\tau)-f(w)))}(f(\tau)-f(w))d\tau\\
&+&2i\fint_{\mathbb T}\frac{\textnormal{Im}\{(\overline \tau-\overline w)(f(\tau)-f(w))\}}{(\tau-w)^2(\tau-w+\EE(f(\tau)-f(w)))}
(h(\tau)-h(w))d\tau.\\
&-&2i\EE\fint_{\mathbb T}\frac{\textnormal{Im}\{(\overline \tau-\overline w)(f(\tau)-f(w))\}}{(\tau-w)^2(\tau-w+\EE(f(\tau)-f(w)))^2}
(f(\tau)-f(w))(h(\tau)-h(w))d\tau.
\end{eqnarray*}
Again the kernels involved in the integral operators satisfy the conditions in Lemma \ref{noyau} 
and so $\partial_{f}g_{2}(\EE, f)(h)$ defines a continuous  function from 
$(-\frac 12, \frac 12)\times B_1^0$ to $\mathbb R.$
Reproducing similar  computations one can prove that $g_3(\EE, f)$ is a $C^1$ function from 
$(-\frac 12, \frac 12)\times B_1^0$ to $\mathbb R,$ and so we get that the function 
$\Omega^0(\EE,f)$ is $C^1.$
\vspace{0,3cm}
\\
{\bf{Part $\hbox{II}$: case $\alpha\in(0,1)$.}}
 As in the first part of the proof, the condition imposed to $\Omega$ 
to guarantee that    the component of $e_1$ in the Fourier expansion  $G^\alpha$ vanishes is  
\begin{equation*}
\fint_{\mathbb{T}}G^\alpha\big(\EE,\Omega,f(w)\big)dw=0.
\end{equation*}
According to the  decomposition \eqref{split1} this assumption is equivalent to

\begin{equation}\label{restr1}
A_1=A_2-A_3,
\end{equation}
with 
$$
A_j=-2i\fint_{\mathbb{T}}G_j\big(\EE,\Omega,f(w)\big) dw.
$$
Note that $A_j$ is the Fourier coefficient of $e_1=\hbox{Im}(w)$ in $G_j$ and when $G_j=\hbox{Im}(F_j)$ then
$$
A_j=\fint_{\mathbb{T}}F_j\big(\EE,\Omega,f(w)\big)\big(\overline{w}^2-1\big) dw.
$$
Looking for  the Fourier expansion of $\hbox{Im }\Big\{\Omega\Big(\EE w-d\Big)\, \overline{w}\,\Big(1+\EE|\EE|^\alpha\overline{f^\prime(w)}\Big)\Big\}$ we deduce that the coefficient of $e_1$ is 
\begin{eqnarray*}
\Omega d\Big(1+\EE|\EE|^\alpha\fint_{\mathbb{T}}f(\tau) d\tau\Big).
\end{eqnarray*}
In a similar way the Fourier  coefficient of $e_1$ in $\Omega\EE^2 |\EE|^{\alpha}\hbox{Im}\Big(f(w)\overline{w}\big(1+\EE|\EE|^\alpha\overline{f^\prime(w)}\big)\Big)$ is 
$$
\Omega\EE ^3 |\EE|^{2\alpha}\fint_{\mathbb{T}}f(w)\overline{f^\prime(w)} \big(\overline{w}^3-\overline{w}\big)dw.
$$
Consequently,
\begin{eqnarray}\label{A_0}
\nonumber A_1&=&
\Omega d\Big(1+\EE|\EE|^\alpha\fint_{\mathbb{T}}f(\tau) d\tau\Big)\\
\nonumber&+&\Omega\EE ^3 |\EE|^{2\alpha}\fint_{\mathbb{T}}f(w)\overline{f^\prime(w)} \big(\overline{w}^3-\overline{w}\big)dw\\
&\equiv&\Omega d+\Omega \EE|\EE|^\alpha\mathcal{R}_0(\EE,f)
\end{eqnarray}
with $\mathcal{R}_0:(-\frac12,\frac12)\times B_1^0\to \RR.$  We point out that for any $(\EE,f)\in (-\frac12,\frac12)\times B_1^0$
\begin{eqnarray}\label{Est-R}
\nonumber |\mathcal{R}_0(\EE,f)|&\le& d\|f\|_{L^\infty}+2|\EE|^{2+\alpha}\|f\|_{L^\infty}\|f^\prime\|_{L^\infty}\\
&\le&d+2|\EE|^{2+\alpha}
\end{eqnarray}
which  means that the function $\mathcal{R}_0$ is well-defined. It is clear that it is differentiable with continuity in the $\EE$-variable and moreover the function is polynomial in $f$ and $f^\prime$ and so its derivative satisfies the required assumption. Therefore we conclude that $\mathcal{R}_0$ is a $C^1$ function from $(-\frac12,\frac12)\times B_1^\alpha$ to $\RR$.
Now we shall compute the quantity $A_1$ associated to $G_1$ which is described by  \eqref{F001} and \eqref{FF1}. Thus
 \begin{eqnarray*}
\nonumber -A_2&=& \fint_{\mathbb{T}}I_{11}(\EE,f(w))\big(\overline{w}-\overline{w}^3\big)dw+\fint_{\mathbb{T}}I_{12}(\EE,f(w))\big(\overline{w}-\overline{w}^3\big)dw\\
\nonumber &+&\EE|\EE|^\alpha \fint_{\mathbb{T}}\Big[I_{11}(\EE,f(w))+I_{12}(\EE,f(w))\Big]\overline{f^\prime(w)}\big(\overline{w}-\overline{w}^3\big)dw\\
&\equiv&A_{11}+A_{12}+\EE|\EE|^\alpha A_{13}.
\end{eqnarray*}
To calculate $A_{11}$ we use  \eqref{FF1} which yields
\begin{eqnarray}\label{F0001}
\nonumber A_{11}&=& C_\alpha \fint_{\mathbb{T}}\fint_{\mathbb{T}}\frac{f^\prime(\tau) \big(\overline{w}-\overline{w}^3\big)}{|\tau-w|^\alpha}dw d\tau+C_\alpha \fint_{\mathbb{T}}\fint_{\mathbb{T}}{f^\prime(\tau) \big(\overline{w}-\overline{w}^3\big)}K(\tau,w)dw d\tau
\end{eqnarray}
with
$$
K(\tau,w)=\frac{1}{|\phi(\tau)-\phi(w)|^\alpha}-\frac{1}{|\tau-w|^\alpha},\quad \phi(w)
=w+\EE|\EE|^\alpha f(w).
$$
From  the Fourier expansion \eqref{i21} we conclude that
$$
\fint_{\mathbb{T}}\fint_{\mathbb{T}}\frac{f^\prime(\tau) \big(\overline{w}-\overline{w}^3\big)}{|\tau-w|^\alpha}dw d\tau=0
$$
and therefore
\begin{eqnarray*}
\nonumber A_{11}&=&C_\alpha \fint_{\mathbb{T}}\fint_{\mathbb{T}}{f^\prime(\tau) \big(\overline{w}-\overline{w}^3\big)}K(\tau,w)dw d\tau.
\end{eqnarray*}
According to \eqref{diff1} we get
\begin{eqnarray}\label{FX1}
 A_{11}&=&C_\alpha \EE|\EE|^\alpha\fint_{\mathbb{T}}\fint_{\mathbb{T}}{f^\prime(\tau) \big(\overline{w}-\overline{w}^3\big)}K(\EE,\tau,w)dw d\tau.
\end{eqnarray}
For the term $A_{12}$ recall  from \eqref{FF1} that
$$
I_{12}(\EE,f(w))=C_\alpha\fint_{\mathbb{T}} K(\EE,\tau,w)d\tau.
$$
Combining \eqref{diff1} with \eqref{Ta1} we get
\begin{eqnarray*}
\nonumber K(\EE,\tau,w)&=&-\alpha\frac{\hbox{Re}\Big((\overline\tau-\overline{w})\big(f(\tau)-f(w)\big)\Big)}{\vert \tau-w\vert^{2+\alpha}}\\
&+&\alpha(2+\alpha)\EE|\EE|^\alpha\int_0^1\int_{0}^1\frac{\widehat{K}(t,\EE,\tau,w)}{\vert \tau-w+st\EE|\EE|^\alpha(f(\tau)-f(w))\vert^{4+\alpha}}dtds\\
&-&\alpha\EE|\EE|^\alpha\int_{0}^1\frac{t\vert f(\tau)-f(w)\vert^{2}}{\vert \tau-w+t\EE|\EE|^\alpha(f(\tau)-f(w))\vert^{2+\alpha}}dt,
\end{eqnarray*}
with 
$$
\widehat{K}(t,\EE,\tau,w)\equiv\hbox{Re}\Big((\overline\tau-\overline{w})
\big(f(\tau)-f(w)\big)\Big)\Big[t\hbox{Re}\Big((\overline\tau-\overline{w})\big(f(\tau)-f(w)\big)\Big)+
s{t^2}\EE|\EE|^\alpha|f(\tau)-f(w)|^2\Big].
$$
Thus
$$
I_{12}(\EE,f(w))=-\alpha C_\alpha\fint_{\mathbb{T}}\frac{\hbox{Re}\Big((\overline\tau-\overline{w})\big(f(\tau)-f(w)\big)\Big)}{\vert \tau-w\vert^{2+\alpha}}d\tau+\EE|\EE|^\alpha\mathcal{I}_{12}(\EE,f(w)).
$$
Using  that the function $\mathcal{I}_{12}$ is a sum of terms defined by integral operators and those operators have kernels satisfying the conditions of Lemma \ref{noyau}, we can conclude that   ${I}_{12}:(-\frac12,\frac12)\times B_1^\alpha\to Y^\alpha$ is a $C^1$ function.  Now using \eqref{i6} we deduce that 
$$
\fint_{\mathbb{T}}\fint_{\mathbb{T}}\frac{\hbox{Re}\Big((\overline\tau-\overline{w})\big(f(\tau)-f(w)\big)\Big)}{\vert \tau-w\vert^{2+\alpha}}(\overline{w}-\overline{w}^3)d\tau dw=0
$$
which implies that
 \begin{eqnarray*}
\nonumber A_{12}&=& \fint_{\mathbb{T}}I_{12}(\EE,f(w))\big(\overline{w}-\overline{w}^3\big)dw\\
&=&\EE|\EE|^\alpha\fint_{\mathbb{T}}\mathcal{I}_{12}(\EE,f(w))\big(\overline{w}-\overline{w}^3\big)dw.
\end{eqnarray*}
Finally we get
\begin{eqnarray}\label{A_1}
\nonumber  -A_2&=&A_{11}+A_{12}+\EE|\EE|^\alpha A_{13}\\
\nonumber &=& C_\alpha \EE|\EE|^\alpha\fint_{\mathbb{T}}\fint_{\mathbb{T}}{f^\prime(\tau) \big(\overline{w}-\overline{w}^3\big)}K(\EE, \tau,w)dw d\tau\\
\nonumber &+&\EE|\EE|^\alpha\fint_{\mathbb{T}}\mathcal{I}_{12}(\EE,f(w))\big(\overline{w}-\overline{w}^3\big)dw+\EE|\EE|^\alpha A_{13}\\
&\equiv&-\EE|\EE|^\alpha\mathcal{R}_1(\EE,f).
\end{eqnarray}
Analyzing carefully all the terms  in $A_2$, as in the foregoing cases,  one may conclude that
$\mathcal{R}_1:(-\frac12,\frac12)\times B_1^\alpha\to \RR$ is $C^1$. So, it remains to compute
{$A_3$} which is described  by \eqref{restr1} \mbox{and \eqref{G_2}},
 \begin{eqnarray}\label{A_2}
\nonumber {A_3}&=&\fint_{\mathbb{T}}{F_3}(\EE,f(w))(\overline w^2-1)dw\\
\nonumber &=&\fint_{\mathbb T}\frac{\alpha C_\alpha}{2(2d)^{1+\alpha}}(\overline w^3-\overline w)dw-\EE\mathcal{R}_2(\EE,f)\\
&=&-\frac{\alpha C_\alpha}{2(2d)^{1+\alpha}}-\EE\mathcal{R}_2(\EE,f)
\end{eqnarray}
and we can check that $\mathcal{R}_2:(-\frac12,\frac12)\times B_1^\alpha\to \RR$ is well-defined and  $C^1$. Combining \eqref{restr1} with  \eqref{A_0}, \eqref{A_1} and \eqref{A_2} we deduce that,
$$
\Omega\Big(d+\EE|\EE|^\alpha\mathcal{R}_0(\EE,f)\Big)=\frac{\alpha C_\alpha}{2(2d)^{1+\alpha}}+\EE|\EE|^\alpha\mathcal{R}_1(\EE,f)+\EE \mathcal{R}_2(\EE,f).
$$
According to \eqref{Est-R}, since $d>2$ then for any $(\EE,f)\in (-\frac12,\frac12)\times B_1^\alpha$ we obtain
\begin{eqnarray*}
d+\EE|\EE|^\alpha\mathcal{R}_0(\EE,f)&\geq& d-d|\EE|^{1+\alpha}-2|\EE|^{3+2\alpha}\\
&\geq&\frac{d}{4}\cdot
\end{eqnarray*}
Therefore
\begin{eqnarray*}
\Omega&=&\Omega(\EE,f)\\
&=&\frac{\frac{\alpha C_\alpha}{2(2d)^{1+\alpha}}+\EE|\EE|^\alpha\mathcal{R}_1(\EE,f)+\EE \mathcal{R}_2(\EE,f)}{d+\EE|\EE|^\alpha\mathcal{R}_0(\EE,f)}\\
&\equiv&\Omega_{sing} +\mathcal{R}^\alpha(\EE,f),\qquad\Omega_{sing}= \frac{\widehat C_\alpha}{(2d)^{2+\alpha}},
\end{eqnarray*}
where  $ \mathcal{R}^\alpha:(-\frac12,\frac12)\times B_1^\alpha\to \RR$ is $C^1$ because it is obtained as an algebraic combination of $C^1$ functions without zeros in the denominator.  Obviously $\mathcal{R}^\alpha(0,f)=0 $ for any $f\in B_1^\alpha$ and this achieves the proof of the proposition.

\end{proof}

\subsection{Proof of the Main Theorem-$\hbox{(i)}$}\label{Secw1}
In this section we will give a precise statement of the first part of the main 
theorem which describes the structure of the solution in a neighborhood of the point vortex pairs.
Recall from Proposition \ref{propX1} that the existence of solutions to the V-states equation can be 
transformed into the resolution of
$$
\widehat{G}^\alpha(\EE,f)=0, \quad (\EE,f)\in (-\frac12,\frac12)\times B_1^\alpha
$$
with $\widehat{G}^\alpha$ being the functional defined by
$$
\widehat{G}^\alpha\big(\EE,f(w)\big)=G^\alpha\big(\EE,\Omega^\alpha(\EE,f),f\big)
$$
and $\Omega(\EE,f)$ has been introduced in Proposition \ref{propX1}.
The main result is the following.
\begin{proposition}\label{propY1}
Let $\alpha\in [0,1)$, then the following holds true.
\begin{enumerate}
\item The linear operator $\partial_f \widehat G^\alpha(0,0):X^\alpha\to \widehat{Y}^\alpha$ is an isomorphism and
$$
 \partial_f\widehat{G}^\alpha(0,0)h(w)=\sum_{n\geq1} a_n \widehat\gamma_n e_{n+1} 
 $$
  with
$$
\widehat \gamma_n=\frac{\widehat{C}_\alpha\Gamma(1-\alpha)}{4\Gamma^2(1-\frac\alpha2)}
\bigg( \frac{2(1+n)}{1-\frac\alpha2}-\frac{\big(1+\frac{\alpha}{2}\big)_{n}}{\big(1-\frac{\alpha}{2}\big)_{n}}
-\frac{\big(1+\frac{\alpha}{2}\big)_{n+1}}{\big(1-\frac{\alpha}{2}\big)_{n+1}}\bigg).
$$
\item There exists $\EE_0>0$ such that the set 
$$
\Big\{(\EE,f)\in [-\EE_0,\EE_0]\times B_1^\alpha,\, s.t.\quad \widehat{G}^\alpha(\EE,f)=0\Big\}
$$
is parametrized by one-dimensional curve $\EE \in[-\EE_0,\EE_0]\mapsto (\EE, f_\EE)$ and
$$
\forall \EE\in [-\EE_0,\EE_0]\backslash \{0\},\, f_\EE\neq0.
$$

\item If $(\EE,f)$ is a solution then $(-\EE,\tilde{f})$ is also a solution,  where
$$
 \tilde{f}(w)=f(-w), \quad \forall w\in \mathbb{T}.
$$
\item For all $\EE \in[-\EE_0,\EE_0]\backslash \{0\},$ the domain $D_1^\EE$ is strictly convex.
\end{enumerate}

\end{proposition}

\begin{proof}
{\bf{(i)}} From the composition rule
$$
\partial_f\widehat{G}^\alpha(0,0)h(w)=\partial_\Omega^\alpha G^\alpha(0,\Omega_{sing}^\alpha,0)\,\partial_f \Omega^\alpha(0,0) h(w)+\partial_f G^\alpha(0,\Omega_{sing}^\alpha,0)h(w).
$$
By virtue of  the expansion in $\EE$ of $\Omega^\alpha(\EE,f)$ given in Proposition \ref{propX1} we deduce that
\begin{eqnarray*}
\partial_f\Omega^\alpha(0,0)&=&\frac{d}{dt}\Omega^\alpha(0, th(w))_{|t=0}\\
&=&0
\end{eqnarray*}
and therefore
$$
\partial_f\widehat{G}^\alpha(0,0)h(w)=\partial_f G^\alpha(0,\Omega_{sing}^\alpha,0)h(w).
$$
Combining this identity once again with Proposition \ref{prop1} we deduce the desired result.
\\
{\bf{(ii)}} As we have seen before in Proposition \ref{propX1}, $\widehat{G}^\alpha:(-\frac12,\frac12)\times B_1^\alpha\to \widehat Y^\alpha$ is $C^1$ and  the linear operator  $\partial_f\widehat{G}^\alpha(0,0): X^\alpha\to \widehat{Y}^\alpha$ is invertible. Therefore we can conclude using the implicit function theorem. It remains to check that $f_\EE\neq0$ for $\EE\neq0.$ For this purpose,  we will prove that for any $\varepsilon$ small enough  and for  any $\Omega$ we can not get a solution with $f=0.$
So, it means that for $\EE\neq0$ we should get
$$
G^\alpha(\varepsilon, \Omega, 0)\neq 0.
$$
We shall start with the case $\alpha=0$ which is much more simpler. It is easy to check from \eqref{def1}  that 
$$F_1(\varepsilon,\Omega,0)=2\Omega\big(\EE-d \,w\big) \quad \hbox{ and}\quad  F_2(\varepsilon,0)=0.
$$ 
However  to compute  $F_3$ we proceed by Taylor expansion as follows,
\begin{eqnarray*}
F_3(\varepsilon,0)&=&-w\fint_{\mathbb T}\frac{\overline\tau}{\varepsilon(\tau+w)-2d}d\tau\\
&=&w\sum_{n\in\NN}\frac{\EE^n}{(2d)^{n+1}}\fint_{\mathbb{T}}\overline{\tau}(\tau+w)^n \, d\tau\\
&=&\sum_{n\in\NN}\frac{\EE^n}{(2d)^{n+1}} w^{n+1},
\end{eqnarray*}
which gives in turn
\begin{equation}\label{F3}
F_3(\EE,0)=\frac{w}{2d-\EE w}\cdot
\end{equation}
Consequently 
$$
G^0(\varepsilon, \Omega, 0)= \hbox{Im}\Big\{-2d\Omega w+\frac{w}{2d-\EE w}\Big\}. 
$$
and this quantity is not zero if $\varepsilon \neq 0$ is small enough. 
\\
Let us now move  to the case $\alpha\in (0,1)$. One finds using   \eqref{split1},  \eqref{G_0X1},  \eqref{F001} and \eqref{I12}, that
$$
G_1(\EE,\Omega,0)=-\Omega d\, \textnormal{Im}(\overline w)\quad\hbox{and} \quad G_2(\EE,\Omega,0)=0.
$$
To compute $G_3(\EE,0)$ it is enough  to calculate $I_2(\EE,0)$ because  $I_1(\varepsilon, 0)=0.$
The exact  computations turn out to be much more involved.  Thus we shall give the expansion of $I_2(\EE,0)$ at the order one in $\EE.$  Applying  \eqref{I_2XX} one gets
\begin{eqnarray*}
I_2(\EE,0)&=&\frac{\alpha C_\alpha}{2(2d)^{1+\alpha}}-\frac{\alpha C_\alpha\EE}{2(2d)^{2+\alpha}}\fint_{\mathbb{T}}|\tau+w|^2d\tau\\
&+&\frac{\alpha C_\alpha(2+\alpha)\EE}{2(2d)^{2+\alpha}}\fint_\mathbb{T}\Big(\hbox{Re}\big[\tau+w\big]\Big)^2d\tau+\EE^2 O(1)\\
&=& \Omega^\alpha_{sing} d-\frac{\widehat{C}_\alpha\EE}{2(2d)^{2+\alpha}} w+ \frac{\widehat{C}_\alpha(2+\alpha)\EE}{4(2d)^{2+\alpha}}\big(w+\overline{w})+\EE^2 O(1),
\end{eqnarray*}
and so 
$$
G_3(\varepsilon, 0)=\textnormal{Im}\{I_2(\EE,0)\overline w\}.
$$
Therefore the V-states equations becomes
\begin{eqnarray*}
\hbox{Im}\Big\{\big(I_2(\EE,0)-\Omega  d\big)\overline{w}\Big\}&=&d(\Omega_{sing}^\alpha-\Omega)\textnormal{Im}(\overline w)
+\EE \frac{\widehat{C}_\alpha(2+\alpha)}{4(2d)^{2+\alpha}}\hbox{Im}(\overline{w}^2)+\EE^2 O(1)\\
&=&0
\end{eqnarray*}
and this equation is impossible for  $0<\EE\le \EE_0$ with $\EE_0$ small enough depending on $d$ \mbox{and $\alpha$. }
\\
\\
{\bf{(iii)}} We shall only present the proof for the case $\alpha=0$ and for the same proof works as well for  $\alpha\in (0,1).$
Using the definition of $\tilde f$ one can check that $T_i(-\EE,\tilde f)=-T_i(\EE,f), \, \text{ for } i=1,2$ and so by (\ref{const4}) we obtain that 
$$
\Omega(\EE, f)=\Omega(-\EE, \tilde f).
$$
Taking the decomposition of $F^0=F_1+F_2+F_3$ given in (\ref{def1}) we only need to check that  
$F_i(\EE,\Omega,f)(-w)=-F_i(-\EE, \Omega, \tilde f)(w), \, \text{ for } i=1,2,3.$ 
Since $\tilde f ^\prime (w)=-f^\prime(-w)$ we have 
\begin{eqnarray*}
F_1(-\EE, \Omega, \tilde f)(w)
&=&2\Omega\big(-\EE\overline w +\EE^2\tilde f(\overline w)-d)w
(1-\EE \tilde f^\prime(w) \big)-\tilde f^\prime(w)\\
&=&-\big[2\Omega\big(\EE(- \overline w) +\EE^2 f(-\overline w)-d)(-w)(1+\EE  f^\prime(-w) \big)-f^\prime(-w)\big]\\
&=&-F_1(\EE, \Omega, f)(-w).
\end{eqnarray*}
Straightforward computations will lead to the same properties for the functions $F_2$ and $F_3.$
It follows that,
$$
F^0(\EE,\Omega,f)(w)=-F^0(-\EE, \Omega, \tilde f)(-w)
$$
and therefore $(-\EE, \tilde f)$ defines a curve of solutions for $0<\EE<\EE_0.$
 \\
 \\
 {\bf{(iv)}} As before we shall restrict the discussion to the case $\alpha=0$ because the proof 
 in the case $\alpha\in(0,1)$  follows  exactly the same lines. First we shall make the following 
 comment about the regularity of the conformal mapping. As it was mentioned in  Remark \ref{Reg-up} one 
 can reproduce the preceding proofs and  replace $\beta$ by  $n+\beta$ with $n\in\NN$. 
 Therefore the implicit function theorem ensures that the function $f_\EE$ belongs to $C^{n+1+\beta}$ for 
 any fixed $n$. Of course, the size of $\EE_0$ is not uniform with respect to $n$ and it shrinks to
 zero as $n$ grows to infinity. Now to prove the convexity of the domain $D_1^\EE$ we shall implement 
 the same arguments  of \cite{HMV}. Recall  that the exterior  conformal mapping associated to this 
 domain is given by
$$
\phi(w)=\EE w+\EE^2 f_\EE(w)
$$
and  the curvature can be expressed by the formula
$$
\kappa(\theta)=\frac{1}{|\phi^\prime(w)|}\operatorname{Re}\Big(1+w\frac{\phi^{\prime\prime}(w)}{\phi^\prime(w)}\Big).
$$
It is plain that 
$$
1+w\frac{\phi^{\prime\prime}(w)}{\phi^\prime(w)}=1+\EE w
\frac{f^{\prime\prime}(w)}{1+\EE f^\prime(w)}
$$
and so
$$
\operatorname{Re}\Big(1+w\frac{\phi^{\prime\prime}(w)}{\phi^\prime(w)}\Big)
\geq 1-|\EE| \frac{|f^{\prime\prime}(w)|}{1-|\EE|f^\prime(w)|} \geq 1-\frac{|\EE|}
{1-|\EE|},
$$
which is non-negative if $|\EE| < 1/2.$ Thus the curvature is strictly positive and therefore the domain is strictly convex.

\end{proof}

  \section{Existence of counter-rotating vortex pairs}\label{count-E}

  In this section we will prove the existence of planar translating pairs of vortex patches 
  with velocity $U$ in the direction $(OY)$  for the $(\hbox{SQG})_\alpha$ with $\alpha\in[0,1)$. 
  The proof is similar to that of  the corotating pairs and therefore we shall skip many details 
  and focus on the significant variations.
  
\subsection{Extension and regularity of $G^\alpha$}
This section is devoted to the study of  the regularity of the functions $G^\alpha$ appearing 
in \eqref{DDX1} and \eqref{split1X}. 
\begin{proposition}\label{propXX}
 Let $\alpha\in [0,1)$, then the following holds true.
\begin{enumerate}
\item The function $G^\alpha$ can be extended from $(-\frac12,\frac12)\times\RR\times B_1^\alpha\to Y^\alpha$ as a $C^1$ function.
\
Moreover,  for any $U\in\RR,\,$ the operator  
$ \partial_fG^\alpha(0,U,0):X^\alpha\to \widehat Y^\alpha$ is an isomorphism. 
More precisely, for  $\displaystyle{h=\sum_{n\geq1} a_n {w^{-n}}\in X^\alpha}$, we get
 $$
 \partial_fG^\alpha(0,U,0)h(w)=-\sum_{n\geq1} a_n \widehat\gamma_n e_{n+1},
 $$
  with
 \begin{eqnarray*}
\widehat \gamma_n&=&\frac{\widehat{C}_\alpha\Gamma(1-\alpha)}{4\Gamma^2(1-\frac\alpha2)}\bigg( \frac{2(1+n)}{1-\frac\alpha2}-\frac{\big(1+\frac{\alpha}{2}\big)_{n}}{\big(1-\frac{\alpha}{2}\big)_{n}}-\frac{\big(1+\frac{\alpha}{2}\big)_{n+1}}{\big(1-\frac{\alpha}{2}\big)_{n+1}}\bigg).
\end{eqnarray*}
\item Two initial point vortex $\pi \delta_0$ and $-\pi \delta_{(2d,0)}$  move  uniformly in the direction $(OY)$ with the speed
 $$
U_{sing}^\alpha\equiv \frac{\widehat{C}_\alpha}{2(2d)^{1+\alpha}}\cdot
 $$

\end{enumerate}

\end{proposition} 
\begin{proof}
${\bf{(i)}}$ The proof is quite similar to $(i)$ of Proposition \ref{prop1}. The only slight difference is in the treatment of $G_1$ which is clearly $C^1$ in the variable  $\EE$ and moreover it has a polynomial  dependence in $\Omega, f, f^\prime,$ and so its derivatives in these variables are also continuous. Note that $G_2$ and $G_3$ are exactly the same functions of   the rotating case, see \eqref{split1}. Now we shall compute the linearized operator and we restrict ourselves only to $\alpha\in (0,1)$. The case $\alpha=0$ can be done separately or by taking the limit when $\alpha\to 0^+$.  It is easy to see that
$$
\partial_f G_1(0,U,0)=0,
$$
and so
\begin{eqnarray*}
\partial_f G^\alpha(0,U,0)&=&\partial_f G_1(0,U,0)+\partial_f G_2(0,0)+\partial_f G_3(0,0)\\
&=&\partial_f G_2(0,0)+\partial_f G_3(0,0).
\end{eqnarray*}
On the other hand by (\ref{F_2}) $\partial_f G_3(0,0)=0, $ and so this operator coincides, after a change of sign,  with the linearized operator in the rotating case and whose formula was stated in Proposition \ref{prop1}.
\\
{\bf{(ii)}} Obvious computations yield 
$$
G_1(0,U,0)=U e_1.
$$
According to  \eqref{G_2}  one finds
$$
G_3(0,0)=-\frac{\alpha C_\alpha}{2(2d)^{1+\alpha}} e_1.
$$
Using \eqref{FF1}, \eqref{F001} and \eqref{I12} we obtain
$$
G_2(0,0)=0.
$$
Therefore we get
 $$
 G^\alpha(0,U,0)=\Big(U-\frac{\alpha C_\alpha}{2(2d)^{1+\alpha}}\Big) e_1
 $$
and consequently $G^\alpha(0,U,0)=0$ if and only if
$$
U=U_{sing}^\alpha\equiv\frac{\widehat{C}_\alpha}{2(2d)^{1+\alpha}}\cdot
$$

\end{proof}
\subsection{Relationship  between the speed  and the boundary shape}\label{real-U}
As for the rotating case the image of the space  $X^\alpha$ by $G^\alpha (\EE,U,\cdot)$ is contained in  $Y^\alpha $ and not necessary in $\widehat{Y}^\alpha.$ Therefore to apply the implicit function theorem we should impose a constraint between $U, \EE$ and $f$ which guarantees a range  contained in $\widehat{Y}^\alpha.$ The main  result reads as follows.
 \begin{proposition}\label{propXX1}
 Let $\alpha\in [0,1)$. There exists a $C^1$ function $\mathcal{R}^\alpha:(-\frac12,\frac12)\times B_1^\alpha\to \RR$  such that with the choice 
 \begin{eqnarray*}
U=U^\alpha(\EE,f)
=U_{sing}^\alpha+\mathcal{R}^\alpha (\EE,f),
\end{eqnarray*}
 the function $\widehat{G}^\alpha:(-\frac12,\frac12)\times B_1^\alpha\to \widehat{Y}^\alpha$ given  by
 $$
\widehat{G}^\alpha(\EE,f)\equiv G^\alpha\big(\EE,U(\EE,f),f\big)
 $$
is well-defined and is $C^1$. Moreover,
$$
\forall f\in B_1^\alpha, \,\mathcal{R}^\alpha (0,f)=0\quad\hbox{and}\quad \widehat{G}^\alpha(0,0)=0.
$$

 \end{proposition}
 \begin{proof}
 We shall follow the same strategy of the subsection \ref{real-velo}. According to  Proposition \ref{propXX} the 
linear operator $\partial_fG^\alpha(0,U,0)$ is an isomorphism from $X^\alpha$ to $\widehat Y^\alpha$, and the latter space  is strictly contained in $Y^\alpha.$ 
Note also  that the image of the  functional $G^\alpha$ lies  in $Y^\alpha$ and therefore  we shall 
 choose $U$ in such a way that the range  of $G^\alpha$ is contained in $\widehat Y^\alpha.$ 
 To this end  we should impose a nonlinear  constraint on $U$ such that the coefficient of $ e_1$ vanishes  in the Fourier expansion of $G^\alpha (U,\EE,f)$. This constraint reads
 \begin{equation}\label{condY}
\fint_{\mathbb{T}}G^\alpha\big(\EE,U,f(w)\big)dw=0.
\end{equation}
\\
{\bf{Case $\alpha=0$}}. According to \eqref{DDX1}, the above condition is equivalent to  
\begin{equation}\label{const1}
\fint_{\mathbb{T}}F^0\big(U,\EE,f(w)\big)\big(\overline{w}^2-1\big) dw=0.
\end{equation}
Note that by residue theorem
$$
\fint_{\mathbb{T}}F_1\big(U,\EE,f(w)\big)\overline{w}^2 dw=2U
$$
and
\begin{eqnarray*}
\fint_{\mathbb{T}}F_1\big(U,\EE,f(w)\big) dw&=&2U\EE\fint_{\mathbb{T}} w\,f^\prime(w) dw\\
&=&-2U\EE \fint_{\mathbb{T}} f(w) dw.
\end{eqnarray*}
Consequently
\begin{eqnarray*}
\fint_{\mathbb{T}}F_1\big(\Omega,\EE,f(w)\big)\big(\overline{w}^2-1\big) dw=2U\Big(1+\EE\fint_{\mathbb{T}} f(w) d w\Big).\end{eqnarray*}
Now we shall calculate the contribution of $F_3.$ First we make  the decomposition
$$
F_3(\EE,f(w))=\widehat{F_3}(w) w\big(1+\EE f^\prime(w)\big),$$
with
$$ \widehat{F_3}(\EE,f(w))\equiv\fint_{\mathbb{T}}\frac{\overline{\tau}+\EE{f(\overline\tau)}}{\EE(\tau+w)+\EE^2 \big(f(\tau)+f(w)\big)-2d}\big(1+\EE f^\prime(\tau)\big)d\tau.
$$
Now one may write
\begin{eqnarray*}
\frac{\overline{\tau}+\EE{f(\overline\tau)}}{\EE(\tau+w)+\EE^2 \big(f(\tau)+f(w)\big)-2d}&=&-\frac{\overline{\tau}}{2d}+\EE\frac{f(\overline{\tau})}{\EE(\tau+w)+\EE^2 \big(f(\tau)+f(w)\big)-2d}\\
&+&\frac{\EE}{2d}\frac{\tau+w+\EE(f(\tau)+f(w))}{\EE(\tau+w)+\EE^2 \big(f(\tau)+f(w)\big)-2d}\overline{\tau}\\
&\equiv&-\frac{\overline{\tau}}{2d}+\EE g_3(\EE,\tau,w).
 \end{eqnarray*}
 Thus we find
 $$
  \widehat{F_3}(\EE,f(w))=-\frac{1}{2d}+\EE\int_{\mathbb{T}}g_3(\EE,\tau,w)\big(1+\EE f^\prime(\tau)\big) d\tau.
 $$
 Hence
\begin{eqnarray*}
 \fint_{\mathbb{T}}F_3\big(\Omega,\EE,f(w)\big)\overline{w}^2 dw&=&-\frac{1}{2d}+\EE\fint_{\mathbb{T}}\fint_{\mathbb{T}}g_3(\EE,\tau,w)\big(1+\EE f^\prime(\tau)\big)\overline{w}\big(1+\EE f^\prime(w)\big) d\tau dw\\
 \fint_{\mathbb{T}}F_3\big(\Omega,\EE,f(w)\big) dw&=&\frac{\EE}{2d}\fint_{\mathbb{T}} f(\tau) d\tau+\EE\fint_{\mathbb{T}}\fint_{\mathbb{T}}g_3(\EE,\tau,w)\big(1+\EE f^\prime(\tau)\big){w}\big(1+\EE f^\prime(w)\big) d\tau dw.
\end{eqnarray*}
It follows that
\begin{eqnarray*}
 \fint_{\mathbb{T}}F_3\big(\Omega,\EE,f(w)\big)\big(\overline{w}^2-1\big) dw&=&-\frac{1}{2d}-\frac{\EE}{2d}\fint_{\mathbb{T}} f(\tau) d\tau\\
 &+&\EE\fint_{\mathbb{T}}\fint_{\mathbb{T}}g_3(\EE,\tau,w)\big(1+\EE f^\prime(\tau)\big)(\overline{w}-w)\big(1+\EE f^\prime(w)\big) d\tau dw.
 \end{eqnarray*}
 On the other hand using residue  theorem we get
 \begin{eqnarray*}
 F_2(\EE,f(w))&=&\EE \fint_{\mathbb{T}}\frac{A\overline{B}-\overline{A} B}{A(A+\EE B)} f^\prime(\tau)d\tau\, w\big(1+\EE f^\prime(w)\big)\\
 &+&\EE\fint_{\mathbb{T}}\frac{\big(\overline{A}B-A\overline{B}\big) B }{A^2(A+\EE B)}d\tau\, w\big(1+\EE f^\prime(w)\big)\\
 &\equiv&\EE g_2(\EE,w)w\big(1+\EE f^\prime(w)\big).
  \end{eqnarray*}
 Thus
 \begin{equation*}
 \fint_{\mathbb{T}}F_2\big(\Omega,\EE,f(w)\big)\overline{w}^2 dw- \fint_{\mathbb{T}}F_2\big(\Omega,\EE,f(w)\big) dw=\EE\fint_{\mathbb{T}}g_2(\EE,w)(\overline{w}-w)\big(1+\EE f^\prime(w)\big)  dw.
 \end{equation*}
The equation \eqref{const1} becomes
\begin{eqnarray*}
 2U\Big(1+\EE\fint_{\mathbb{T}}f(w) dw\Big)&=&\frac{1}{2d}+\frac{\EE}{2d}\fint_{\mathbb{T}} f(\tau) d\tau\\&+&\EE\fint_{\mathbb{T}}g_2(\EE,w)(w-\overline{w})\big(1+\EE f^\prime(w)\big)  dw\\
 &+&\EE\fint_{\mathbb{T}}\fint_{\mathbb{T}}g_3(\EE,\tau,w)\big(1+\EE f^\prime(\tau)\big)(w-\overline{w})\big(1+\EE f^\prime(w)\big) d\tau dw\\
 &\equiv&\frac{1}{2d}+\frac{\EE}{2d} T_1(\EE,f)
  \end{eqnarray*}
  which may be written in the form
 \begin{eqnarray}\label{const2}
 \nonumber U&=&U^0(\EE,f)\\
 \nonumber&=&\frac{1}{4d}\frac{1+\EE T_1(\EE,f)}{1+\EE T_2(f)}\\
  \nonumber&=&U_{sing}^0+\frac{\EE}{4d}\frac{T_1(\EE,f)-T_2(f)}{1+\EE T_2(f)}\\
  &\equiv&U_{sing}^0+\mathcal{R}^0(\EE,f),
  \end{eqnarray}
  with
  $$
  T_2(f)=\fint_{\mathbb{T}}f(w) dw.
  $$
  \\
  {\bf{Case $\alpha\in (0,1)$.}}  
 From the splitting  \eqref{split1X} the assumption \eqref{condY} is equivalent to
\begin{equation}\label{restr1X}
A_0=-A_1-A_2,
\end{equation}
with 
$$
A_j=-2i\fint_{\mathbb{T}}G_j\big(\EE,\Omega,f(w)\big)) dw.
$$
Note that $A_j$ is the Fourier coefficient of $e_1=\hbox{Im}(w)$ in $G_j\equiv \hbox{Im}(F_j),$ then
$$
A_j=\fint_{\mathbb{T}}F_j\big(\EE,\Omega,f(w)\big))\big(\overline{w}^2-1\big) dw.
$$
The computation of $A_1$ is easy,
$$
A_1=-U\fint_{\mathbb{T}}\big(1+\EE|\EE|^\alpha f^\prime(\overline{w})\big)\big(\overline{w}^3-\overline{w}\big)dw.
$$
Since 
$$
f(w)=\sum_{n\geq1} a_n \overline{w}^n\quad\hbox{and}\quad f^\prime(w)=-\sum_{n\geq1} na_n  \overline{w}^{n+1},
$$
then 
\begin{eqnarray*}
\fint_{\mathbb{T}}f^\prime(\overline{w})\big(\overline{w}^3-\overline{w}\big)dw&=&-a_1\\&=&-\fint_{\mathbb{T}} f(\tau) d\tau.
\end{eqnarray*}
Consequently
\begin{equation}\label{A0X}
A_1=U\Big(1+\EE|\EE|^\alpha\fint_{\mathbb{T}}f(\tau) d\tau\Big).
\end{equation}
Combining \eqref{restr1X} with \eqref{A0X}, \eqref{A_1}  and \eqref{A_2} one gets
$$
U\Big(1+\EE|\EE|^\alpha\fint_{\mathbb{T}}f(\tau) d\tau\Big)=\frac{\alpha C_\alpha}{2(2d)^{1+\alpha}}+\EE \mathcal{R}_2(\EE,f)-\EE|\EE|^\alpha\mathcal{R}_1(\EE,f).
$$
Thus
\begin{eqnarray*}
U&=&U^\alpha(\EE,f)\\
&\equiv&\frac{\frac{\alpha C_\alpha}{2(2d)^{1+\alpha}}+\EE \mathcal{R}_2(\EE,f)-\EE|\EE|^\alpha\mathcal{R}_1(\EE,f)}{1+\EE|\EE|^\alpha\fint_{\mathbb{T}}f(\tau) d\tau}\\
&\equiv&\frac{\alpha C_\alpha}{2(2d)^{1+\alpha}}+\mathcal{R}^\alpha(\EE,f).
\end{eqnarray*}
Similarly to the rotating case, $\mathcal{R}^\alpha:(-\frac12,\frac12)\times B_1^\alpha\to \RR$ is well-defined and $C^1.$ 
Moreover, by construction  one can see that $\mathcal R^\alpha(0,f)=0. $ This will imply that 
  $\widehat G^\alpha(0,0)=0.$

\end{proof}
\subsection{Proof of the Main Theorem-$\hbox{(ii)}$}
Recall that the V-states equation can be written in the form
$$
\widehat{G}^\alpha(\EE,f)=0, \quad (\EE,f)\in (-1/2,1/2)\times B_1^\alpha,
$$
with $\widehat{G}^\alpha$ being the functional defined by
$$
\widehat{G}^\alpha\big(\EE,f(w)\big)=G^\alpha\big(\EE,U^\alpha (\EE,f),f\big).
$$
The proof of the existence of translating pairs stated in the Main Theorem follows from the next
proposition  whose  proof is quite similar to that of Proposition \ref{propY1} and so is
left to the reader.
\begin{proposition}
Let $\alpha\in [0,1)$. The following holds true.
\begin{enumerate}
\item The linear operator $\partial_f \widehat G^\alpha(0,0):X^\alpha\to \widehat{Y}^\alpha$ is an isomorphism and
$$
 \partial_f\widehat{G}^\alpha(0,0)h(w)=-\sum_{n\geq1} a_n \widehat\gamma_n e_{n+1} 
 $$
  with
$$
\widehat \gamma_n=\frac{\widehat{C}_\alpha\Gamma(1-\alpha)}{4\Gamma^2(1-\frac\alpha2)}\bigg( \frac{2(1+n)}{1-\frac\alpha2}-\frac{\big(1+\frac{\alpha}{2}\big)_{n}}{\big(1-\frac{\alpha}{2}\big)_{n}}-\frac{\big(1+\frac{\alpha}{2}\big)_{n+1}}{\big(1-\frac{\alpha}{2}\big)_{n+1}}\bigg).
$$
\item There exists $\EE_0>0$ such that the set 
$$
\Big\{(\EE,f)\in [-\EE_0,\EE_0]\times B_1^\alpha,\, s.t.\quad \widehat{G}^\alpha(\EE,f)=0\Big\}
$$
is parametrized by one-dimensional curve $\EE \in[-\EE_0,\EE_0]\mapsto (\EE, f_\EE)$ and
$$
\forall \EE\in [-\EE_0,\EE_0]\backslash\{0\},\, f_\EE\neq0.
$$

\item If $(\EE,f)$ is a solution then $(-\EE,\tilde{f})$ is also a solution,  where
$$
 \tilde{f}(w)=f(-w), \quad \forall w\in \mathbb{T}.
$$
\item For all $\EE \in[-\EE_0,\EE_0],$ the domain $D_1^\EE$ is strictly convex.

\end{enumerate}

\end{proposition}
 \begin{ackname}
This work was partially supported by the grants of Generalitat de Catalunya 2014SGR7,
Ministerio de Economia y Competitividad MTM 2013-4469,
ECPP7- Marie Curie project MAnET  and the ANR project Dyficolti ANR-13-BS01-0003- 01.\end{ackname}

 \end{document}